\providecommand{\U}[1]{\protect\rule{.1in}{.1in}}
\pgfplotsset{compat = newest}
\newtheorem{thm}{Theorem}
\newtheorem{lem}[thm]{Lemma}
\newtheorem{prop}[thm]{Proposition}
\newtheorem{rmk}{Remark}
\newtheorem{Cor}{Corollary}
\newenvironment{proof}[1][Proof] {\noindent\textbf{#1.} } {\ }
\newcommand{\Ss}{\mathcal{S}^{1}\left(\varphi,\, \overline{\tau}_{n}\right)}
\newcommand{\re}[1]{\color{black}#1}
\begin{document}
\begin{frontmatter}
%\title{A low-degree normalized B-spline-like representation for  Hermite osculatory interpolation problems}
\title{Normalized B-spline-like representation for low-degree Hermite osculatory interpolation problems}
\author[enes]{M. Boushabi}
\ead{mboushabi1@gmail.com}
\author[roma]{S. Eddargani\corref{dmb}}
\ead{eddargani@mat.uniroma2.it}
\cortext[dmb]{corresponding author.}
\author[granada]{M. J. Ib\'{a}\~{n}ez}
\ead{mibanez@ugr.es}
\author[enes]{A. Lamnii}
\ead{a.lamnii@uae.ac.ma}
\address[enes]{Abdelmalek Essaadi University, LaSAD, ENS, 93030 Tetouan, Morocco}
\address[roma]{Department of Mathematics, University of Rome Tor Vergata, 00133 Rome, Italy}
\address[granada]{Department of Applied Mathematics, University of Granada, 18071 Granada, Spain}

\begin{abstract}
This paper deals with Hermite osculatory interpolating splines. For a partition of a real interval endowed with a refinement consisting in dividing each subinterval into two small subintervals, we consider a space of smooth splines with additional smoothness at the vertices of the initial partition, and of the lowest possible degree. A normalized B-spline-like representation for the considered spline space is provided. In addition, several quasi-interpolation operators based on blossoming and control polynomials have also been developed. Some numerical tests are presented and compared with some recent works to illustrate the performance of the proposed approach.
\end{abstract}
\begin{keyword}
Bernstein-B\'{e}zier representation  \sep Hermite osculatory interpolation \sep   B-spline-like functions \sep  quasi-interpolation \sep MDB-splines. 
\end{keyword}
\end{frontmatter}
\section{Introduction}
A spline function can be defined as a piecewise polynomial function whose pieces are connected together meeting given smoothness conditions. Spline functions are widely used in many areas, among them the computer-aided geometric design industry, where splines are used to represent geometric entities. The use of smooth splines of low degree is advantageous because {\color{black} it combines smoothness with a low degree that helps to effectively address a variety of problems.}

Given a partition $\tau_{n}:=\left\lbrace v_i,\, 0\leq i \leq n \right\rbrace $ of a bounded interval $I:=\left[a, b \right]$ into subintervals $I_i = \left[ v_i,\, v_{i+1} \right] $, $0\leq i \leq n-1$, and real values $f_i^j$, $0\leq i \leq n$, $0\leq j \leq \varphi(i)-1$, where $\varphi$ is a function that maps $\mathbb{N}$ into $\mathbb{N}$ with $\varphi(i)$  denoting the number of data imposed at  vertex $v_i$. The osculatory Hermite interpolation problem consists in finding a spline function $s$ such that (a) for any $0\leq i \leq n-1$, its restriction $s_i$ to $I_i$ is a polynomial of degree $1+\varphi(i) + \varphi(i+1)$;  (b) $s$ is $\mathcal{C}^{\varphi(i)-1}$ at $v_i$, $0 < i < n$; and (c) $s^{(j)} \left( v_i \right)= f_i^j$, $0\leq i \leq n$, $0\leq j \leq \varphi(i)-1$. This problem has a unique solution \cite{Lamnii2016}. Furthermore, the spline solution of this osculatory interpolation problem belongs to the linear space defined as follows: 
%polynomial spline $s$ of degree $1+\varphi(i) + \varphi(i+1)$ restricted to $I_i$ that is $\mathcal{C}^{\varphi(i)}$ continuous at $v_i$, $0\leq i \leq n$, such that $s^{(j)} \left( v_i \right)= f_i^j$ admits a unique solution \cite{Lamnii2016}. 
\[
\mathcal{S}\left( \varphi, \, \tau_n\right)=\left\lbrace s\in \mathcal{C}^{\varphi (i)-1} \left( v_i \right), 0< i< n \text{  and  } \, s_{\mid I_i} \in \mathbb{P}_{1+\varphi(i) + \varphi(i+1)}, \, 0\leq i \leq n-1 \right\rbrace ,
\]
where $\mathbb{P}_d$ denotes the linear space of polynomials of degree $\leq d$.

This problem has recently been addressed in \cite{Lamnii2016}, where a  basis based on two-point osculatory interpolation has been presented. These basis functions can be used as a useful tool for constructing curves, compressing data, and shape-preserving data. However, the elements of the proposed basis are not all non-negative as they define classical Hermite basis. Moreover, they are polynomials of degree $1+\varphi(i) + \varphi(i+1)$ on each subinterval $I_i$, $i=0, \ldots, n-1$.  In general, this kind of problem does not receive enough attention in the literature, a few papers dealing with osculatory interpolation can be found in the literature \cite{Int1,Int2,Int3,Int4}.

As pointed out in \cite{Lamnii2016}, the classical construction of Hermite osculatory interpolating splines requires that the spline degree in a subinterval $I_i$ must be $1+\varphi(i) + \varphi(i+1)$. From this insight, we wonder whether we can use a lower degree. Inspired by bivariate splines, we can consider the split of the initial partition by inserting new split points, i.e., dividing each sub-interval into some small sub-intervals. 

A refinement of the initial partition ensures the use of lower degrees in each element of the refined partition. This refinement approach that consists in dividing each initial subinterval into two small subintervals by adding a split knot is first considered in \cite{Schumaker1983}. It represents the univariate case of the Powell-Sabin 6-split \cite{Speleers2013, Jcam2022, Jcam2023}. The bivariate 6-split approach was introduced in \cite{PS} to construct $\mathcal{C}^1$ quadratic splines, resulting further in intensive research \cite{Dierckx,LaiAndSchumaker, Camwa,Lamnii2D,Sbibih2D,SpeleersG}

In this work, we consider a refinement $\overline{\tau}_n$ of the partition $\tau_n$, which is defined by inserting a split point $\zeta_i$ in each sub-interval $I_i$.  Let $\mathbb{P}_{\varphi, \overline{\tau}_n}$ be the space of piecewise polynomials of degree $\varphi (i)$ restricted to the sub-intervals sharing $v_i$ ($[\zeta_{i-1},\, v_i]$ and $[v_i,\, \zeta_{i}]$, $i=0, \ldots, n$, with $\zeta_{-1}=v_0,\,\zeta_n=v_n$) i.e. $\mathbb{P}_{\varphi, \overline{\tau}_n} = \left\lbrace p_{\mid [\zeta_{i-1},\, v_i]} ,\, p_{\mid [v_i,\, \zeta_{i}]}\in \mathbb{P}_{\varphi (i)}    \right\rbrace $. With $\zeta = \left\lbrace \zeta_i,\, i=0,\ldots , n-1 \right\rbrace$, an osculatory Hermite interpolating spline problem in the spline space 
\[
 \mathcal{S}^{1}\left(\varphi,\, \overline{\tau}_{n}\right)  :=\left\{  s\in  \mathcal{C}^{\varphi (i)-1} \left( v_i \right),\, s\in \mathcal{C}^{1}(\zeta)  :s\in \mathbb{P}_{\varphi, \overline{\tau}_n},\ 0< i <
n\right\},
\]
on the refined partition $ \overline{\tau}_{n} = \tau_n \cup \zeta$, will be stated and analysed. 

Functions in this spline space are $\mathcal{C}^1$ everywhere and $\mathcal{C}^{\varphi (i)-1}$  super smooth at the interior vertices $v_i$ of $ \tau_n$. The degree of the spline becomes $\varphi (i)$ on the sub-intervals $[\zeta_{i-1}, v_i]$ and $[v_i, \zeta_{i}]$.
% For the boundary knots, we will use $\zeta_{-1}=v_0$ and $\zeta_n=v_n$.

%The solution of the osculatory interpolation problem will be explicitly determined by means of a local construction. Due to the fact that the spline space is characterized by an interpolation problem, a basis is achieved as dual of the basis of the dual space given by the interpolation functionals.

 A local construction will be used to determine explicitly the solution of the osculatory interpolation problem. The spline space is characterized by an interpolation problem, therefore a basis is obtained as a dual of the basis of the dual space given by the interpolation functionals. In detail, each spline is uniquely determined by its values and those of its derivatives up to order $\varphi(i)-1$ at the vertices $v_i$, $i=0, \ldots, n$.

In contrast to recent works addressing the Hermite osculatory interpolation problem, where the basis used does not necessarily benefit from the usually required properties, such as the non-negativity of the basis functions and the partition of unity, and inspired by the construction in the bivariate case, we will provide a geometric approach leading us to construct a normalised B-spline-like basis for the spline space $\Ss$.

Furthermore, family of smooth quasi-interpolation schemes involving values and/or derivatives of a given function are also provided. The construction of these schemes is mainly based on establishing  Marsden's identity.  It is a potent tool that allows the monomials to be written in terms of the corresponding B-spline-like functions. To this end, we have established a general Marsden's identity in the spline space $\Ss$ using an approach based on the theory of control polynomials. The terminology and application of control polynomials for spline representations has been introduced in \cite{C1} and further developed in \cite{SpeleersG,C2}. It forms the foundation of many spline constructions, see, for example \cite{Jcam2022}.

The use of the refined partition offers some advantages. Specifically, it can be used to incorporate shape parameters in order to construct approximation splines preserving convexity or monotonicity to the given data \cite{Schumaker1983}. By imposing conditions on the position of the newly inserted points, it is possible to preserve the shape properties of the spline spaces proposed in this work. In this way, simpler results can be achieved than those available when using spaces without refinement. As a result, the normalized B-spline-like functions proposed here could be successfully used to define shape-preserving interpolating splines.

The considered spline space $\mathcal{S}^{1}\left(\varphi,\, \overline{\tau}_{n}\right) $ can be seen also as multi-degree spline space (in short MD-splines), which are an extension of standard B-splines. The term multi-degree refers to the concept of using B-splines with varying degrees to represent different segments of a curve or surface. These MD-splines were treated in different ways since their first introduction in \cite{MDB1}. In fact, the authors in \cite{MDB2} presented an approach for constructing MDB-splines by means of an integral recurrence relation. This recurrence construction is difficult to compute and instead of it, the authors in \cite{MDB3}  introduced a geometrically oriented recursive procedure. {\re Both works do} not allow the use of multiple knots to reduce the order of smoothness. Namely, both constructions yield $C^r$ continuous splines at the join between two pieces of the same degree $d=r+1$, and the smoothness order between two pieces of degrees $d_i$ and $d_{i+1}$ is $C^{\min \left( d_i, d_{i+1}\right) } $. A construction that extends the previous ones and that allows the use of multiple knots and, therefore, allows any order of continuity from $C^0$ to $C^{\min \left( d_i, d_{i+1}\right) } $, has been introduced in \cite{MDB4b} and further analysed in \cite{MDB4,MDB5}.  Furthermore, it has been investigated whether it is possible to have evaluation techniques based on algebraic recurrence relations, drawing on the famous "Cox-de Boor" method. But, recurrence schemes of this type have been identified and it has been proved that they only exist for certain MD-spline spaces, namely those in which the pieces of different degrees are glued together with continuity at the most $C^1$ \cite{MDB3}. Starting from this limitation, two approaches to deal with MD-spline spaces of arbitrary structure have been introduced. These two methods are based on a common basic idea, which is to map a set of known functions into the basis of interest. The first approach consists in computing the basis functions by interpolation, taking into account the fact that Hermite interpolation problems are unisolvent in MDB-spline spaces \cite{MDB4b,MDB4}. The second one comes from the idea of calculating explicitly, without solving any linear system as in the first approach. Namely, it defines a matrix operator which expresses the mapping between a well-known basis (Bernstein basis) and the set of MDB-splines \cite{MDB5,MDB6}. A Matlab Toolbox is provided in \cite{MDB7} to illustrate the computation and the use of MDB-splines. Recent work extends the theoretical and algorithmic approach of MDB-splines to generalized Tchebycheffian splines  \cite{MDB8,S22}. These non-polynomial splines exhibit favourable performance within the framework of Isogeometric analysis \cite{RCH23}, demonstrating their efficacy in avoiding oscillations by choosing the appropriate tension parameters, thereby ensuring stability and yielding robust numerical solutions.

As stated before, $\mathcal{S}^{1}\left(\varphi,\, \overline{\tau}_{n}\right) $ is a MD-spline space defined in a special structure. More precisely, when the degree changes from one piece to another, the smoothness at the shared knot is $C^1$, while the smoothness at a knot connecting two pieces of the same degree $d$ is of the order $C^{d-1}$. To construct a B-spline-like basis for $\mathcal{S}^{1}\left(\varphi,\, \overline{\tau}_{n}\right) $, we follow insights from the two approaches listed above. The term "like" means that the constructed spline basis has the same properties as classical B-splines. In particular, starting from the first approach presented in \cite{MDB4b}, we pretend that a B-spline-like basis can be determined by interpolation, avoiding at the same time the drawback of solving linear systems. And from the second approach, we note that a basis can be expressed locally in terms of easily computable basis, in particular Bernstein basis functions, and then glued together in a geometric intuition inspired by bivariate splines on micro-triangles. Therefore, we started with local  Bernstein bases relative to the breakpoint intervals, then we glued all the pieces together and the resulting B-spline-like basis is then also in Bernstein form.

The resulting B-spline-like basis functions of $\mathcal{S}^{1}\left(\varphi,\, \overline{\tau}_{n}\right) $ have the same useful properties as MDB-spline bases. They are non-negative, form a partition of unity and have local support. In terms of support, MDB-splines have the minimum possible support, which means that the support of the proposed B-spline-like basis is larger by a subinterval of that of MDB-splines. This occurs because here we associate to each initial vertex a set of basis functions, while we do not do so with the splitting points, which results in the fact that the support of the basis functions with respect to the initial vertices must be overlapped with the neighbours of the splitting points.

The remainder of this paper is organized as follows. Section $2$ is devoted to recalling some basic preliminaries about Bernstein-B\'{e}zier representation and some results on the blossom. In section $3$, we start from a classic Hermite basis for the space $\Ss$, then we provide a normalized B-spline-like basis using a geometric approach. In section $4$, we develop a family of quasi-interpolation schemes based on Marsden's identity. Finally,  we provide some numerical tests to illustrate the performance of the proposed quasi-interpolants in section $5$.
\section{Preliminaries}\label{pre}
In this section, we recall some results related to the Bernstein-B\'{e}zier (BB-) representation of polynomials and blossoming.

Consider the bounded interval $I=[a, b]$, the value of Bernstein basis polynomial $\mathfrak{B}_{\alpha,I}^{d}$ with the non-negative multi-index $\alpha:=\left(  \alpha_{1},\alpha_{2}\right)  $, $\left\vert \alpha\right\vert :=\alpha
_{1}+\alpha_{2}=d$, at any point $v \in I$ is given by
%Before proceeding, denote $\mathfrak{B}_{\alpha,I}^{d}$, the Bernstein polynomials of degree $d$ relative to $I$, are defined as
\[
\mathfrak{B}_{\alpha,\,I}^{d}\left(v \right):=\frac{d!}{\alpha_1! \alpha_2!} \frac{\left(b-v\right)^{\alpha_1}\left(v-a\right)^{\alpha_2}}{\left(b-a\right)^{d}},
\]
The set $\left\lbrace \mathfrak{B}_{\alpha,\,I}^{d}, \, \left\vert \alpha\right\vert =d \right\rbrace $ is a basis of the linear space $\mathbb{P}_{d}(I)$ of polynomials of degree less than or equal to $d$ defined over $I$, which is the best basis of $\mathbb{P}_{d}(I)$ according to the concept of optimal normalized totally positive basis \cite{Mazure2004,Carnicer1994}.

The BB-representation of a polynomial $p \in \mathbb{P}_{d}\left(I\right)$ is the unique linear combination
\begin{equation}\label{B_representation}
 p\left(v\right)=\sum_{|\alpha|=d}\,c_{\alpha}\, \mathfrak{B}_{\alpha,I}^{d}\left(v\right) ,
\end{equation}
where the coefficients $c_{\alpha}$ are said to be the B\'{e}zier (B-) ordinates of $p$. They are naturally linked to the domain points $\tfrac{1}{d} \left( \alpha_1 a +\alpha_2 b \right)$. These B-ordinates can be expressed as values of the blossom of $p$ (see \cite{Ramshaw}), i.e. the unique, symmetric, multi-affine polynomial  $\mathbf{B}\left[  p\right]$: $\left( \mathbb{R} \right)^d \rightarrow \mathbb{R} $ fulfilling the diagonal property $\mathbf{B}\left[  p\right] \left( v\left[ d\right]  \right) = p\left( v\right)$, where $v\left[ \ell\right]$ marks the repetition $\ell$ times of the point $v$ as an argument of the blossoming, omitting the term $\left[ \ell\right]$ when $\ell=1$. It holds,
\[
c_{\alpha}=\mathbf{B}\left[  p\right]  \left(  a[\alpha_{1}],\,b[\alpha_{2}]\right).
\]

In this work, we deal with osculatory interpolation, which means that the spline can change its degree from one subinterval to another. Establishing the appropriate smoothness conditions of a spline at the point shared by two subintervals is necessary. More precisely, let $I_{i,1}:=[v_{i},\zeta_{i}]$ and $I_{i,2}:=[\zeta_{i},v_{i+1}]$, and suppose that the restrictions $p_1$ and $p_2$ of a spline $s$ to $I_{i,1}$ and $I_{i,2}$ are in $\mathbb{P}_{d_1}$ and $\mathbb{P}_{d_2}$, respectively. The following result holds.
%The smoothing conditions of two adjacent polynomial patches can be easily expressed in terms of B-ordinates with respect to the intevals. 
%Let $I_{i,1}:=[v_{i},\zeta_{i}]$ and $I_{i,2}:=[\zeta_{i},v_{i+1}]$ be two adjacent intervals.
%For this reason, we present the following lemma,
 \begin{lem}\label{lemma1}
The spline $s$ is $\mathcal{C}^{1}$ smooth at $\zeta_{i}$ if and only if 
\begin{equation}\label{C1_smoothness}
\hat{c}_{\left(d_2,0\right)}=c_{\left(0,d_1\right)}=\frac{1}{{\frac{d_1}{\zeta_{i}-v_{i}} }+\frac{d_2}{v_{i+1}-\zeta_{i}}}\left(\frac{d_2}{v_{i+1}-\zeta_{i}}\,\hat{c}_{\left(d_2-1,1\right)}+\frac{d_1}{\zeta_{i}-v_{i}}\, c_{\left(1,d_1-1\right)}\right),
\end{equation}
where ${c}_{\alpha}$  and  $\hat{c}_{\alpha}$ are the B-ordinates of $\mathit{p}_{1}$ and $\mathit{p}_{2}$, respectively.
 \end{lem}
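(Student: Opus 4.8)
The plan is to reduce the $\mathcal{C}^1$ smoothness condition at $\zeta_i$ to two scalar equations — matching of values and matching of first derivatives — and then solve the derivative equation for the common ordinate, given that the value-matching equation forces $\hat{c}_{(d_2,0)} = c_{(0,d_1)}$.

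First I would recall the endpoint evaluation and derivative formulas for the BB-form. For $p_1 \in \mathbb{P}_{d_1}$ on $I_{i,1} = [v_i,\zeta_i]$ with B-ordinates $c_\alpha$, the values at the right endpoint are $p_1(\zeta_i) = c_{(0,d_1)}$ and $p_1'(\zeta_i) = \frac{d_1}{\zeta_i - v_i}\bigl(c_{(0,d_1)} - c_{(1,d_1-1)}\bigr)$; this is the standard fact that the derivative of a Bernstein polynomial at an endpoint is a scaled difference of the two extreme ordinates, the scaling being the degree over the interval length. Symmetrically, for $p_2 \in \mathbb{P}_{d_2}$ on $I_{i,2} = [\zeta_i, v_{i+1}]$ with B-ordinates $\hat{c}_\alpha$, the left-endpoint value is $p_2(\zeta_i) = \hat{c}_{(d_2,0)}$ and $p_2'(\zeta_i) = \frac{d_2}{v_{i+1} - \zeta_i}\bigl(\hat{c}_{(d_2-1,1)} - \hat{c}_{(d_2,0)}\bigr)$. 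These can be obtained either directly by differentiating $\mathfrak{B}^{d}_{\alpha,I}$, or more slickly from the blossom: $p_1(\zeta_i) = \mathbf{B}[p_1](\zeta_i[d_1])$, while $p_1'(\zeta_i)$ is, up to the factor $d_1/(\zeta_i - v_i)$, the difference $\mathbf{B}[p_1](\zeta_i[d_1-1],\zeta_i) - \mathbf{B}[p_1](\zeta_i[d_1-1], v_i)$ by multi-affinity in the last slot.

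Then $\mathcal{C}^1$ smoothness at $\zeta_i$ is equivalent to the pair $p_1(\zeta_i) = p_2(\zeta_i)$ and $p_1'(\zeta_i) = p_2'(\zeta_i)$. The first equation gives immediately $c_{(0,d_1)} = \hat{c}_{(d_2,0)}$; call this common value $\gamma$. Substituting into the derivative equation yields
\[
\frac{d_1}{\zeta_i - v_i}\bigl(\gamma - c_{(1,d_1-1)}\bigr) = \frac{d_2}{v_{i+1}-\zeta_i}\bigl(\hat{c}_{(d_2-1,1)} - \gamma\bigr).
\]
Collecting the terms in $\gamma$ on one side gives $\gamma\bigl(\frac{d_1}{\zeta_i - v_i} + \frac{d_2}{v_{i+1}-\zeta_i}\bigr) = \frac{d_1}{\zeta_i - v_i} c_{(1,d_1-1)} + \frac{d_2}{v_{i+1}-\zeta_i}\hat{c}_{(d_2-1,1)}$, and dividing by the (strictly positive, hence nonzero) bracket produces exactly the stated formula \eqref{C1_smoothness}. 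Conversely, if $\hat{c}_{(d_2,0)} = c_{(0,d_1)} = \gamma$ with $\gamma$ given by that formula, one reads the same two scalar identities back, so the equivalence holds in both directions.

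The only point requiring a little care — and the closest thing to an obstacle — is the bookkeeping of the multi-index orientation on the two subintervals: on $I_{i,1}$ the second index counts proximity to the right endpoint $\zeta_i$, whereas on $I_{i,2}$ the first index counts proximity to the left endpoint $\zeta_i$, so the "inner" ordinates are $c_{(1,d_1-1)}$ and $\hat{c}_{(d_2-1,1)}$ respectively and the signs in the two derivative expressions are opposite. Once that is fixed consistently, the computation is a one-line linear solve, and I would present it via the blossom identities above to keep it coordinate-free and short.
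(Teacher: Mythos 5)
Your proposal is correct and follows essentially the same route as the paper's proof: express $p_1$ and $p_2$ in BB-form, use the endpoint value and derivative formulas, impose $p_1^{(j)}(\zeta_i)=p_2^{(j)}(\zeta_i)$ for $j=0,1$, and solve the resulting linear equation for the common ordinate. You merely spell out the final linear solve that the paper leaves implicit.
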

 \begin{proof}
The polynomials  $p_{1}$ and $p_{2}$ are expressed in BB-representation as
\begin{align*}
p_{1}(v)& =\sum_{|\alpha|=d_1}\,c_{\alpha}\,\mathfrak{B}_{\alpha
,\,I_{i,1}}^{d_1}\left(  v\right)   ,\quad v\in I_{i,1},\\
p_{2}(v)& =\sum_{|\beta|=d_2}\,\hat{c}_{\beta}\,\mathfrak{B}_{\beta
,\,I_{i,2}}^{d_2}\left(  v\right)   ,\quad v\in I_{i,2},
\end{align*}
then,  
\begin{align*}
p_{1}^{\prime} \left(\zeta_{i}\right) & =\frac{d_1}{\zeta_{i}-v_{i}} \left( c_{\left(0,d_1\right)} - c_{\left(1,d_1-1\right)}\right) \text{  and  }
p_{2}^{\prime} \left(\zeta_{i}\right)= \frac{d_2}{v_{i+1}-\zeta_{i}}\left( \hat{c}_{\left(d_2-1,1\right)} -  \hat{c}_{\left(d_2,0\right)} \right).
\end{align*}
Since $s$ is $\mathcal{C}^{1}$ smoothness at the split point $\zeta_{i}$ if and only if $p_{1}^{(j)} \left(\zeta_{i}\right)=p_{2}^{(j)} \left(\zeta_{i}\right)$, $j=0, 1$. Then the claim follows.\qed
\end{proof}

The result in Lemma \ref{lemma1} can also be seen as a direct consequence of the $C^1$ imposition on MD-splines elaborated in Section 2.2 of \cite{C3}, taking into account:
\[
\alpha^{(i)} = \frac{d_1}{\zeta_{i}-v_i},\qquad \beta^{(i)} = \frac{d_2}{v_{i+1}-\zeta_{i}}.
\]
The behaviour of a spline function at any vertex can be predicted from the behaviour of the control polynomials at the same vertex. These polynomials are the main tool for establishing the Marsden's identity, which is the key to constructing spline quasi-interpolation schemes.

In what follows, we derive a result that leads us to define the control polynomials from the relationship between the polynomials and their blossoms. First, let us recall the following result \cite{Lamnii2014, Eddargani2021}.
\begin{lem}
\label{lemma_pp1} Let $d_{1}$ and $d_{2}$ be two positive integers ($d_{2} <  d_{1}$). For any polynomial $p\in\,\mathbb{P}_{d_{1}}$ and any set of values $x_{1},\ldots,\,x_{d_{1}-d_{2}}$ in $\mathbb{R}$, define,
\[
q\left(  x\right)  :=\mathbf{B}\left[  p\right]  \left(  x_{1},\ldots
,\,x_{d_{1}-d_{2}},\,x [d_2]\right). %
\]
The function $q$ is a polynomial of degree less than or equal to $d_{2}$, and for any
set of values $y_{1},\ldots,\,y_{d_{2}}$ in $\mathbb{R}$, it holds
\[
\mathbf{B}\left[  q\right]  \left(  y_{1},\ldots,\,y_{d_{2}}\right)
=\mathbf{B}\left[  p\right]  (x_{1},\ldots,\,x_{d_{1}-d_{2}},\,y_{1}%
,\ldots,\,y_{d_{2}}).
\]

\end{lem}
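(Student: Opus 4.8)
The plan is to rely entirely on the three characterizing properties of the blossom recalled in Section~\ref{pre} — symmetry, multi-affinity, and the diagonal property — together with the fact that these properties determine the blossom of a polynomial of prescribed degree uniquely. No computation with explicit coefficients is needed.

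First I would dispose of the degree claim. After fixing the parameters $x_{1},\ldots,x_{d_{1}-d_{2}}$, the function $q$ is obtained from the multi-affine function $\mathbf{B}[p]$ by inserting one and the same variable $x$ into the last $d_{2}$ of its $d_{1}$ slots. Since $\mathbf{B}[p]$ is affine in each slot separately, carrying out this substitution produces a polynomial in $x$ in which each of the $d_{2}$ repeated slots contributes a factor of degree at most one; hence $q\in\mathbb{P}_{d_{2}}$.

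Next, to identify the blossom of $q$, I would introduce the function of $d_{2}$ variables
\[
R\left(y_{1},\ldots,y_{d_{2}}\right):=\mathbf{B}[p]\left(x_{1},\ldots,x_{d_{1}-d_{2}},\,y_{1},\ldots,y_{d_{2}}\right)
\]
and verify that it enjoys the three defining properties of $\mathbf{B}[q]$. Symmetry of $R$ in $y_{1},\ldots,y_{d_{2}}$ follows from the symmetry of $\mathbf{B}[p]$ under those permutations of its $d_{1}$ arguments that fix the first $d_{1}-d_{2}$ of them. Affinity of $R$ in each $y_{j}$ is inherited directly from the multi-affinity of $\mathbf{B}[p]$. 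Finally, the diagonal property is precisely the definition of $q$, namely $R\left(y[d_{2}]\right)=\mathbf{B}[p]\left(x_{1},\ldots,x_{d_{1}-d_{2}},\,y[d_{2}]\right)=q(y)$. By uniqueness of the blossom of a polynomial of degree $\le d_{2}$, we conclude $R=\mathbf{B}[q]$, which is the asserted identity.

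I do not anticipate a genuine obstacle; this is essentially the restriction (or "trace") property of blossoms specialized to polynomials. The only point that needs care is bookkeeping: one must apply the uniqueness theorem in the right degree, i.e. view $R$ as a candidate blossom for a polynomial of degree $\le d_{2}$ — which is consistent with the degree bound on $q$ established in the first step — and quote the symmetry and multi-affinity of $\mathbf{B}[p]$ in exactly the form stated in Section~\ref{pre}.
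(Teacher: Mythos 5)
Your argument is correct: the degree bound follows from multi-affinity, and the identification of $\mathbf{B}[q]$ with the partially evaluated $\mathbf{B}[p]$ follows from checking symmetry, multi-affinity, and the diagonal property and then invoking the uniqueness of the blossom, which is exactly how the paper's own definition of the blossom (``the unique, symmetric, multi-affine polynomial fulfilling the diagonal property'') licenses the conclusion. Note that the paper does not prove this lemma at all --- it is recalled from the cited references --- so there is no in-paper proof to compare against; your restriction-of-the-polar-form argument is the standard one and fills that gap cleanly.
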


In what follows, we will define the control polynomial of degree $d_2$ at the vertex $v_{1}$ of a polynomial $p$ of degree $d_1$ ($d_2 \leq d_1$), which represents an alternative way to establish Marsden’s identity \cite{Matcom2022}. The following result can be seen as a restricted version of Theorem 3 in \cite{SpeleersG} and Theorem 1 in \cite{C2}.

\begin{prop}
\label{prop1} Consider  two positive integers $d_{1}$ and $d_{2}$, with
$d_{2} < d_{1}$. Let $p\in\,\mathbb{P}_{d_{1}} $ and $v_{1}\in\mathbb{R}$.
For any real number $\theta \neq 0$, the polynomial $q$ of degree $d_{2}$ defined by
\begin{equation}
q(v):=\mathbf{B}[p](v_{1}[d_{1}-d_{2}],\,(\frac{1}{\theta} v+(1-\frac{1}{\theta})v_{1})[d_{2}]),
\label{pp_pr1}%
\end{equation}
meets
\[
 p^{(j)}\left(  v_{1}\right)  \,=\,\theta^{j}\,\frac{\binom{d_{1}%
}{j}}{\binom{d_{2}}{j}}\,q^{(j)}(v_{1})
\]
for all $0\leq j\leq d_{2}$.
\end{prop}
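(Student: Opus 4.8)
The plan is to separate the affine reparametrization from a Taylor-type computation with blossoms. First I would set $\phi(v):=\tfrac1\theta v+(1-\tfrac1\theta)v_1$, observe that $\phi(v_1)=v_1$ and $\phi'\equiv\tfrac1\theta$, and introduce
\[
h(x):=\mathbf{B}[p]\bigl(v_1[d_1-d_2],\,x[d_2]\bigr),
\]
so that $q=h\circ\phi$. By Lemma \ref{lemma_pp1} (applied with $x_1=\cdots=x_{d_1-d_2}=v_1$), $h$ is a polynomial of degree at most $d_2$. Since $\phi$ is affine, the chain rule gives the exact identity $q^{(j)}(v_1)=h^{(j)}(\phi(v_1))(\phi')^{j}=\theta^{-j}h^{(j)}(v_1)$, so the statement reduces to proving $p^{(j)}(v_1)=\binom{d_1}{j}\binom{d_2}{j}^{-1}h^{(j)}(v_1)$ for $0\le j\le d_2$.

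To obtain this, I would expand $p$ about $v_1$, namely $p(v)=\sum_{k=0}^{d_1}\tfrac{p^{(k)}(v_1)}{k!}(v-v_1)^k$, and use that the blossom is linear in $p$ and symmetric multi-affine. The degree-$d_1$ blossom of the monomial $(\cdot-v_1)^k$ is the normalized elementary symmetric polynomial $\binom{d_1}{k}^{-1}e_k(x_1-v_1,\dots,x_{d_1}-v_1)$; evaluating it at $(v_1[d_1-d_2],x[d_2])$ kills the first $d_1-d_2$ arguments and leaves $e_k(0[d_1-d_2],(x-v_1)[d_2])=\binom{d_2}{k}(x-v_1)^k$. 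Summing, and noting that the terms with $k>d_2$ vanish because $\binom{d_2}{k}=0$,
\[
h(x)=\sum_{k=0}^{d_2}\frac{\binom{d_2}{k}}{\binom{d_1}{k}}\,\frac{p^{(k)}(v_1)}{k!}\,(x-v_1)^k .
\]
This is precisely the Taylor expansion of $h$ at $v_1$, so comparing coefficients gives $h^{(j)}(v_1)=\binom{d_2}{j}\binom{d_1}{j}^{-1}p^{(j)}(v_1)$ for $0\le j\le d_2$. Substituting into $q^{(j)}(v_1)=\theta^{-j}h^{(j)}(v_1)$ yields $p^{(j)}(v_1)=\theta^{j}\binom{d_1}{j}\binom{d_2}{j}^{-1}q^{(j)}(v_1)$, which is the claim; the hypothesis $d_2<d_1$ together with $j\le d_2$ guarantees all the binomial coefficients involved are nonzero, so every division is legitimate.

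The only real subtlety is bookkeeping with blossoms: one must remember that the blossom of a polynomial depends on the ambient degree in which it is viewed (here $p$ and each monomial $(\cdot-v_1)^k$ are blossomed as elements of $\mathbb{P}_{d_1}$), and one must verify the elementary identity $e_k(0[d_1-d_2],y[d_2])=\binom{d_2}{k}y^k$. An alternative route that avoids the explicit monomial blossoms is to invoke the standard differentiation rule for blossoms, $P^{(j)}(t)=\tfrac{D!}{(D-j)!}\,\partial_{x_1}\cdots\partial_{x_j}\mathbf{B}[P](t[D-j],x_1,\dots,x_j)$ for $P\in\mathbb{P}_D$ (the mixed partial being constant by multi-affinity, see \cite{Ramshaw}), and to combine it with the blossom identity $\mathbf{B}[h](v_1[d_2-j],x_1,\dots,x_j)=\mathbf{B}[p](v_1[d_1-j],x_1,\dots,x_j)$ furnished by Lemma \ref{lemma_pp1}; then $h^{(j)}(v_1)$ and $p^{(j)}(v_1)$ are the same mixed partial up to the prefactors $\tfrac{d_2!}{(d_2-j)!}$ and $\tfrac{d_1!}{(d_1-j)!}$, i.e.\ up to $\binom{d_2}{j}\binom{d_1}{j}^{-1}$, and one concludes as above.
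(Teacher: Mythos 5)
Your argument is correct and complete. Note that the paper itself does not prove this proposition: it merely remarks that it is a restricted version of Theorem 3 in \cite{SpeleersG} and Theorem 1 in \cite{C2}, so you are supplying a self-contained argument where the paper only cites. Your route is sound: the factorization $q=h\circ\phi$ with $\phi$ affine of slope $1/\theta$ fixing $v_1$ cleanly isolates the factor $\theta^{-j}$, and the identity $h^{(j)}(v_1)=\binom{d_2}{j}\binom{d_1}{j}^{-1}p^{(j)}(v_1)$ follows correctly from the blossom of the shifted monomials, since $\mathbf{B}[(\cdot-v_1)^k]$ in $\mathbb{P}_{d_1}$ is $\binom{d_1}{k}^{-1}e_k(x_1-v_1,\dots,x_{d_1}-v_1)$ and $e_k(0[d_1-d_2],y[d_2])=\binom{d_2}{k}y^k$; the vanishing of the terms with $k>d_2$ and the nonvanishing of all binomial coefficients for $j\le d_2<d_1$ are both handled. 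Your alternative via the blossom differentiation rule $P^{(j)}(t)=\frac{D!}{(D-j)!}\partial_{x_1}\cdots\partial_{x_j}\mathbf{B}[P](t[D-j],x_1,\dots,x_j)$ combined with Lemma \ref{lemma_pp1} is essentially the mechanism behind the cited theorems and is arguably the shorter path, but either version is a valid proof.
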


{\re When $\theta = \frac{d_2}{d_1}$, it signifies that both the value and the first derivative of functions $p$ and $q$ at $v_1$ coincide. This implies that $q$ serves as the control polynomial of degree $d_2$ at $v_1$ for the polynomial $p$.}  It holds,

%{\re When $\theta = \frac{d_1}{d_2}$, $q$ is said to be the control polynomial of degree $d_2$ at $v_1$ of the polynomial $p$}. It holds,
\begin{equation}\label{eq_for_pos}
c_{(d_1-d_2+k, \ell)}=\tilde{c}_{(k, \ell)},  \quad \text{  for all  } k+\ell=d_2,
\end{equation}
where $c_{(k, \ell)}$, $k+\ell=d_1$, and $\tilde{c}_{(k, \ell)}$, $k+\ell=d_2$ represent the B-ordinates of $p$ and $q$, respectively.

\section{Normalized B-spline-like representation}
Let $\tau_n$ and $\overline{\tau}_n$ be the subsets defined in the introduction, which define a partition of  $I$ and a refinement of it, respectively. A spline $s \in \Ss$ can be considered as a solution to the following Hermite osculatory interpolation problem:

\begin{thm}
Given values $ f_{i,j},\ i=0, \ldots, n,\ j=0,\dots, \varphi(i)-1$, there exists a unique spline $s \in \Ss $ such that
\begin{equation}\label{main_problem_interpolation}
s^{(j)}(v_i)=f_{i,j} \quad 0 \leq  i \leq n,\ 0 \leq j \leq \varphi(i)-1.
\end{equation}
\end{thm}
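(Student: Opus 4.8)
The plan is to prove existence and uniqueness by a dimension-count plus a "homogeneous system has only the trivial solution" argument, carried out locally. First I would count the degrees of freedom: the interpolation conditions in \eqref{main_problem_interpolation} prescribe $\sum_{i=0}^{n}\varphi(i)$ scalar values. I would then compute $\dim \mathcal{S}^{1}\left(\varphi,\, \overline{\tau}_{n}\right)$ and check it equals this number, so that it suffices to prove that the only spline in $\Ss$ with all $f_{i,j}=0$ is $s\equiv 0$. To compute the dimension, I would sum the local polynomial dimensions over the $2n$ micro-intervals $[\zeta_{i-1},v_i]$ and $[v_i,\zeta_i]$ (each piece on a subinterval sharing $v_i$ contributing $\varphi(i)+1$ coefficients), then subtract the smoothness constraints: $\mathcal{C}^1$ at each split point $\zeta_i$ (that is $2$ conditions at each of the $n$ points $\zeta_i$) and $\mathcal{C}^{\varphi(i)-1}$ at each interior vertex $v_i$ (that is $\varphi(i)$ conditions at each of the $n-1$ interior $v_i$). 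A short bookkeeping check should make these two counts match.

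\textbf{Local construction for uniqueness.} For the core argument, suppose $s\in\Ss$ satisfies $s^{(j)}(v_i)=0$ for all admissible $i,j$. I would argue subinterval by subinterval that $s\equiv 0$ on $I_i=[v_i,v_{i+1}]$. Fix $i$ and look at the two polynomial pieces $p_1=s_{\mid[v_i,\zeta_i]}$ and $p_2=s_{\mid[\zeta_i,v_{i+1}]}$, both of degree $\varphi$-dependent. At $v_i$ the spline vanishes to order $\varphi(i)-1$, which pins down $\varphi(i)$ of the B-ordinates of $p_1$ adjacent to $v_i$; symmetrically the conditions at $v_{i+1}$ pin down $\varphi(i+1)$ B-ordinates of $p_2$ adjacent to $v_{i+1}$. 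The remaining freedom in each piece is then constrained by the $\mathcal{C}^1$-junction at $\zeta_i$, whose explicit form is given in Lemma \ref{lemma1} via \eqref{C1_smoothness}. Counting: $p_1$ has $\varphi(i)+1$ B-ordinates, $p_2$ has $\varphi(i+1)+1$; the vertex conditions kill $\varphi(i)$ and $\varphi(i+1)$ of them respectively, leaving exactly two free ordinates (one per piece), and the two $\mathcal{C}^1$ equations at $\zeta_i$ from \eqref{C1_smoothness} then force those to vanish as well. Hence $p_1=p_2=0$ on $I_i$, and since $i$ was arbitrary, $s\equiv0$ on $I$.

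\textbf{Conclusion and main obstacle.} Injectivity of the linear evaluation map $s\mapsto\bigl(s^{(j)}(v_i)\bigr)$ from $\Ss$ to $\mathbb{R}^{\sum_i\varphi(i)}$ together with the equality of dimensions gives that the map is a bijection, yielding both existence and uniqueness for arbitrary data $f_{i,j}$; one may alternatively note this was already established in \cite{Lamnii2016} for the non-refined setting and adapt it, but the refined proof is cleaner. I expect the main obstacle to be the careful handling of the boundary vertices $v_0$ and $v_n$, where $\zeta_{-1}=v_0$ and $\zeta_n=v_n$ mean the "left piece at $v_0$" and "right piece at $v_n$" are degenerate, so the local degree-of-freedom count there must be done separately; a second delicate point is verifying that the $\varphi(i)$ Hermite conditions at $v_i$ and the $\mathcal{C}^{\varphi(i)-1}$ supersmoothness at $v_i$ interact consistently — i.e. that prescribing derivatives up to order $\varphi(i)-1$ of $s$ at $v_i$ is compatible with, and exactly complementary to, the supersmoothness already built into the space — which is exactly where the identification of $\Ss$ as the natural solution space pays off.
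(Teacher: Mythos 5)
Your core local argument is exactly the paper's: on each macro-interval the Hermite data at $v_i$ and $v_{i+1}$ pins down all B-ordinates except the one at the split point, and the $\mathcal{C}^1$ junction \eqref{C1_smoothness} (which packages both the $\mathcal{C}^0$ identification $c_{(0,\varphi(i))}=\hat{c}_{(\varphi(i+1),0)}$ and the derivative match) determines that last one. Where you differ is the wrapper: the paper runs this as a direct construction, which yields existence and uniqueness simultaneously, whereas you run it on the homogeneous problem and pair it with a dimension count. Your bookkeeping does come out right ($\sum_{i=0}^{n-1}(\varphi(i)+\varphi(i+1)+2)-2n-\sum_{i=1}^{n-1}\varphi(i)=\sum_{i=0}^{n}\varphi(i)$), but be aware that ``raw coefficients minus smoothness constraints'' only gives $\dim\Ss\ge\sum_i\varphi(i)$ a priori; you should state that you combine this lower bound with the injectivity you prove (which gives $\dim\Ss\le\sum_i\varphi(i)$) to get equality and hence bijectivity, rather than asserting the dimension formula outright — or simply drop the count and observe, as the paper does, that your local determination of the B-ordinates already produces a spline in $\Ss$ matching arbitrary data (the matched derivatives up to order $\varphi(i)-1$ between the two degree-$\varphi(i)$ pieces meeting at $v_i$ automatically give the required supersmoothness there). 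Your flagged worry about the boundary is harmless: at $v_0$ and $v_n$ there is only one micro-interval piece and the same count goes through with nothing to glue on the outer side.
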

\begin{figure}[!htp]
\centering
\begin{tikzpicture}[scale=1]
\draw[gray] (0,0) --(14,0);
%%%%%%%%%%%%%%%%%%
\draw[fill] (0,0) circle (0.05cm);
\draw[fill] (1.5,0) circle (0.05cm);
\draw[fill] (4.5,0) circle (0.05cm);
\draw[] (7,0) circle (0.05cm);
\draw[fill] (9.5,0) circle (0.05cm);
\draw[fill] (12,0) circle (0.05cm);
\draw[fill] (14,0) circle (0.05cm);
%%%%%%%%%%%%%%%%%%%
\node at (0,0.5) {\small {$c_{\left(\varphi(i) ,0\right)}$}};
\node at (1.5,0.5) {\small {$c_{\left(\varphi(i)-1,1\right)}$}};
\node at (3,0.5){\small{\dots}} ;
\node at (4.5,0.5) {\small {$c_{\left(1,\varphi(i)-1\right)}$}};
\node at (7,0.5) {\small {
$c_{\left(0,\varphi(i)\right)}=\hat{c}_{\left(\varphi(i+1),0\right)}$}};
\node at (9.5,0.5) {\small {$\hat{c}_{\left(\varphi(i+1)-1,1\right)}$}};
\node at (10.8,0.5) {\small {\dots}};
 \node at (12.2,0.5) {\small {$\hat{c}_{\left(1,\varphi(i+1)-1\right)}$}};
\node at (13.8,0.5) {\small {$\hat{c}_{\left(0,\varphi(i+1)\right)}$}};
\node at (0,-0.25) {\small {$v_{i}$}};
\node at (7,-0.25) {\small {$\zeta_{i}$}};
\node at (14,-0.25) {\small {$ v_{i+1}$}};
\end{tikzpicture}
\caption{A schematic representation of the B-ordinates of $s$ relative to the subintervals $[v_{i},\zeta_{i}]$ and $[\zeta_{i},v_{i+1}]$.}
\label{graphical_represenation_of Thm1}
\end{figure}
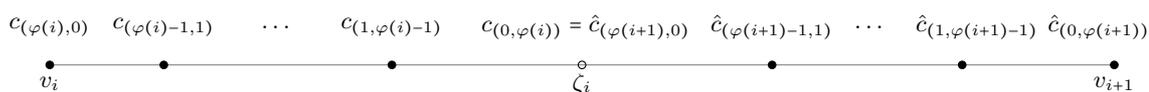
\begin{proof}
The proof will be done on a single interior sub-interval. Its extension to the whole partition can be carried out analogously. The B-ordinates of $s$ restricted to the interval $[v_{i},v_{i+1}]$ are schematically represented in Figure \ref{graphical_represenation_of Thm1}.

Since $s$ is $\mathcal{C}^{\varphi(i)-1}$ at $v_i$ and $\mathcal{C}^{\varphi(i+1)-1}$ at $v_{i+1}$, then the values and derivatives up to order $\varphi(i)-1$ at $v_i$ and $\varphi(i+1)-1$ at $v_{i+1}$, are uniquely determined by the B-ordinates relative to the domain points at distance less or equal to $\varphi(i)-1$ from $v_i$ and $\varphi(i+1)-1$ from $v_{i+1}$, i.e. the B-ordinates marked by ($\bullet$) in Figure \ref{graphical_represenation_of Thm1}.

\noindent The remaining B-ordinate, indicated by ($\circ$), is computed from $\mathcal{C}^1$ smoothness condition at $\zeta_i$.\qed
\end{proof}

In what follows, we will start by constructing a classical Hermite basis to $\Ss$, which is non-positive, and then we will provide a geometric approach that can help us to establish a new normalized B-spline-like basis.
\subsection{Classical Hermite basis}
Let $\phi_{i,j}$, $j=0, \ldots, \varphi(i)-1 $,  be the unique functions in $\Ss$ fulfilling the interpolation conditions
\begin{equation}
\phi_{i,j}^{(k)}(v_{\ell})=\delta_{i \ell}\delta_{j k},  \quad 0\leq i, \ell \leq n, \quad 0\leq j, k \leq \varphi(i)-1,
\end{equation}
where $\delta$ stands for the Kronecker delta. \\
Obviously, the functions $\phi_{i, j}$ , $j=0, \ldots, \varphi(i)-1 $, are locally supported on $\left[v_{i-1}, v_{i+1}\right]$, and every spline in $\Ss$ can be expressed as 
\begin{equation}
s=\sum_{i=0}^{n}\,\sum_{j=0}^{\varphi(i)-1} f_{i,j}\,\phi_{i, j}.
\end{equation}
The functions $\phi_{i, j}$ represent the classical Hermite basis of the space $\Ss $.

%The fact that the elements of this basis are non-positive can lead to numerical instability in practice.

The fact that the elements of this basis are non-positive maybe results in reduced numerical stability. To overcome this drawback, we propose to construct a normalized basis of the spline space $\Ss $.
 
\subsection{Normalized B-spline-like basis}
We seek for linearly independent combinations of the classical Hermite basis functions that yield a B-spline-like basis. Namely, we will look for a suitable B-spline-like representation of $s \in \Ss$ as
\begin{equation}
s=\sum_{i=0}^{n}\,\sum_{|\alpha|=\varphi(i)-1}\,\mu_{i,\alpha}\,\mathcal{N}_{i,\alpha},\label{normalized-spline}
\end{equation}
in which the basis functions $\mathcal{N}_{i,\alpha}$ are non-negative, locally supported and form a partition of unity. We shall call $\mathcal{N}_{i,\alpha}$ a B-spline-like with respect to the vertex $v_i$.

In what follows, we will illustrate how to construct $\mathcal{N}_{i,\alpha}$, $i=0, \ldots, n$, $|\alpha|=\varphi(i)-1$. In fact, the construction used here has a geometric sense  and is inspired by the bivariate case \cite{Dierckx,C2}. It is entirely based on the choice of a single interval $S_i:= \left[ S_{i,1}, S_{i,2} \right] $ for each $v_i$ in the initial partition $\tau_n$. For each interior vertex $v_i$, define
\begin{equation}
S_{i,1}:=\theta \zeta_{i-1} +(1-\theta)v_{i} \text{  and  } S_{i,2}:=\theta \zeta_{i}+(1-\theta)v_{i}, \label{choiceofpoints}%
\end{equation}
where $0<\theta=\frac{\varphi(i)-1}{\varphi(i)}<1$. More generally, other values of $\theta$ could also be considered for practical purposes, even if it will not affect the shape of the basis functions $\mathcal{N}_{i,\alpha}$, see Section 3.3 of \cite{C2} for a detailed discussion.

Consider the Bernstein polynomials $\mathfrak{B}^{\varphi(i)-1}_{\alpha, S_i}$, $\vert \alpha \vert = \varphi(i)-1$, of degree $\varphi(i)-1$ on $S_{i}$, and define parameters
\begin{equation}
\gamma_{i,\alpha}^{j}:=\frac{%
\begin{pmatrix}
\varphi(i) \\
j
\end{pmatrix}
}{%
\begin{pmatrix}
\varphi(i)-1 \\
j
\end{pmatrix}
}\theta^{j}\,\left(\mathfrak{B}_{\alpha,\,S_{i}}^{\varphi(i)-1}\right)^{(j)}(v_{i}), \label{gamma_values}%
\end{equation}
for $0\leq j\leq \varphi(i)-1$, $|\alpha|=\varphi(i)-1$. They are used to define the B-spline-like functions $\mathcal{N}_{i,\alpha}$ as unique solutions of interpolation problems of the form (\ref{main_problem_interpolation}). More precisely,
\[
\mathcal{N}_{i,\alpha}^{(j)}(v_k)=\gamma_{i,\alpha}^{j}\delta_{ik}\delta_{j\alpha_2},\quad 0\leq i,k \leq n, \quad 0\leq j \leq \varphi(i)-1.
\]
The B-splines $\mathcal{N}_{i,\alpha}$, $\vert \alpha \vert=\varphi(i)-1$ are  linear combinations of the classical Hermite basis functions. Write
\[
\mathcal{N}_{i,\alpha} = \sum_{j=0}^{\varphi(i)-1} \gamma_{i,\alpha}^{j} \phi_{i,j}.
\]

\begin{rmk}\label{rmk_pos}
By the definition (\ref{gamma_values}) of the parameters providing the B-spline-like basis functions $\mathcal{N}_{i,\alpha} $, Proposition \ref{prop1} and (\ref{eq_for_pos}) show that the B-ordinates $c_{\varphi(i)-\ell, \ell}$,  $\ell=0, \ldots, \varphi(i)-1$, of $\mathcal{N}_{i,\alpha} $ relative to the domain points $\mathcal{D}_i:=\left\lbrace \frac{\varphi(i)-\ell}{\varphi(i)}v_i+\frac{\ell}{\varphi(i)}\zeta_{i},\right. $ $\left.  \, \ell=0, \ldots, \varphi(i)-1\right\rbrace$ can be seen as the B-ordinates of the Bernstein basis polynomial $\mathfrak{B}_{\alpha,\,S_{i}}^{\varphi(i)-1}$ of degree $\varphi(i)-1$, defined on $S_{i}$ (after subdivision). Moreover, they can be expressed in terms of blossom as follows,
\begin{equation}\label{eq_pos_2}
c_{\varphi(i)-\ell, \ell} = \mathbf{B}[\mathfrak{B}_{\alpha,\,S_{i}}^{\varphi(i)-1}]\left( \tau [\varphi(i)-\ell-1], (1-\tau) [\ell]  \right),
\end{equation}
where $\left( \tau, 1-\tau\right)$ represents the barycentric coordinates of the points of $\mathcal{D}_i$ with respect to the interval $S_i$.
\end{rmk}

As mentioned before, among the essential properties in many fields is that the elements of the basis of the spline space should be non-negative and form a convex partition of unity. These two properties are stated and proved in the following two results.
\begin{lem}
The B-spline-like functions $\mathcal{N}_{i,\alpha}$, $\vert \alpha \vert=\varphi(i)-1$, associated with the vertex $v_i \in \tau_n$ are non-negative.
\end{lem}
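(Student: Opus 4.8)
The plan is to prove the stronger statement that \emph{every} Bernstein--B\'ezier ordinate of $\mathcal{N}_{i,\alpha}$ relative to the micro-intervals of $\overline{\tau}_n$ is non-negative. Since on each micro-interval a polynomial piece is the linear combination of the corresponding (non-negative) Bernstein basis polynomials with precisely these ordinates as coefficients, this immediately yields $\mathcal{N}_{i,\alpha}\ge 0$ on all of $I$. As $\mathcal{N}_{i,\alpha}$ is supported in $[v_{i-1},v_{i+1}]$, only the four micro-intervals $[v_{i-1},\zeta_{i-1}]$, $[\zeta_{i-1},v_i]$, $[v_i,\zeta_i]$, $[\zeta_i,v_{i+1}]$ have to be inspected (only two of them when $v_i$ is a boundary vertex, a case treated in the same, simpler, way); I would work outwards from $v_i$.

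For the two micro-intervals abutting $v_i$, I would invoke Remark~\ref{rmk_pos} together with its mirror image obtained by interchanging $\zeta_i$ and $\zeta_{i-1}$: combining Proposition~\ref{prop1} with \eqref{eq_for_pos}, the $\varphi(i)$ B-ordinates of $\mathcal{N}_{i,\alpha}$ lying within distance $\varphi(i)-1$ of $v_i$ on $[v_i,\zeta_i]$ (and, symmetrically, on $[\zeta_{i-1},v_i]$) are values $\mathbf{B}[\mathfrak{B}_{\alpha,S_i}^{\varphi(i)-1}]$ evaluated at repetitions of the barycentric coordinates, with respect to $S_i$, of the points of $\mathcal{D}_i$ and of its left counterpart. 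The decisive observation is that the choice $\theta=\tfrac{\varphi(i)-1}{\varphi(i)}$ in \eqref{choiceofpoints} makes the extreme point of $\mathcal{D}_i$ coincide with $S_{i,2}$ and the extreme point of its left counterpart with $S_{i,1}$, so that all of these points lie in $[S_{i,1},v_i]\cup[v_i,S_{i,2}]\subseteq S_i$ and hence have non-negative barycentric coordinates relative to $S_i$. Since $\mathfrak{B}_{\alpha,S_i}^{\varphi(i)-1}$ has, with respect to its own interval $S_i$, all B-ordinates equal to $0$ or $1$, the blossoming (de Casteljau) evaluation of $\mathbf{B}[\mathfrak{B}_{\alpha,S_i}^{\varphi(i)-1}]$ at arguments with non-negative barycentric coordinates is a finite chain of convex combinations, hence non-negative. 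This settles all the B-ordinates on these two micro-intervals except the two that sit at $\zeta_{i-1}$ and $\zeta_i$.

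On the outer micro-intervals the analysis is elementary. Because $\mathcal{N}_{i,\alpha}\in\Ss$ vanishes identically outside $[v_{i-1},v_{i+1}]$ and is $\mathcal{C}^{\varphi(i-1)-1}$ at $v_{i-1}$, its restriction to $[v_{i-1},\zeta_{i-1}]$ is a polynomial of degree $\varphi(i-1)$ that vanishes to order $\varphi(i-1)$ at $v_{i-1}$, hence a scalar multiple of $(v-v_{i-1})^{\varphi(i-1)}$; consequently all of its B-ordinates on that micro-interval are zero except the single one located at $\zeta_{i-1}$. The same argument on $[\zeta_i,v_{i+1}]$, using the $\mathcal{C}^{\varphi(i+1)-1}$ smoothness at $v_{i+1}$, shows that only the B-ordinate at $\zeta_i$ can be nonzero there.

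It remains to handle the two B-ordinates at $\zeta_{i-1}$ and at $\zeta_i$. By Lemma~\ref{lemma1} each of them is the common value of the adjacent pieces at a $\mathcal{C}^1$ join, and \eqref{C1_smoothness} exhibits it as a convex combination, with strictly positive weights summing to one, of the neighbouring B-ordinate coming from the outer micro-interval (which is $0$ by the previous paragraph) and the neighbouring B-ordinate coming from the micro-interval abutting $v_i$ (which is $\ge 0$ by the second paragraph). Hence both are non-negative, every B-ordinate of $\mathcal{N}_{i,\alpha}$ is non-negative, and the claim follows. I expect the only genuinely delicate point to be the middle step: verifying that the calibration $\theta=\tfrac{\varphi(i)-1}{\varphi(i)}$ really forces the whole family $\mathcal{D}_i$ (and its left counterpart) into $S_i$, and combining this with the non-negativity of the blossom of a single Bernstein polynomial on its defining interval; the rest is bookkeeping of B-ordinates along the four micro-intervals.
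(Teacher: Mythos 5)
Your proof is correct and follows the same core mechanism as the paper's: Remark~\ref{rmk_pos} expresses the B-ordinates adjacent to $v_i$ as blossom values of $\mathfrak{B}_{\alpha,S_i}^{\varphi(i)-1}$, and non-negativity follows because the calibration $\theta=\tfrac{\varphi(i)-1}{\varphi(i)}$ places $\mathcal{D}_i$ (and its left mirror) inside $S_i$, so these values are iterated convex combinations of the $\{0,1\}$ B-ordinates of a Bernstein polynomial. You are in fact more complete than the paper, which leaves implicit the verification that $\mathcal{D}_i\subset S_i$, the vanishing of the B-ordinates on the outer micro-intervals $[v_{i-1},\zeta_{i-1}]$ and $[\zeta_i,v_{i+1}]$, and the treatment of the ordinates at the split points via the convex combination in \eqref{C1_smoothness} — all of which you supply correctly.
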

\begin{proof}

 For fixed $\alpha$ and $i$, it is sufficient to show that all B-ordinates of $\mathcal{N}_{i,\alpha}$ are non-negative.   Denote by $c_{\beta}$, $\beta \in \mathbb{N}^2$, $\vert \beta \vert=\varphi(i)$ the B-ordinates of $\mathcal{N}_{i,\alpha}$ restricted to the interval $[v_i, \zeta_i]$. From Remark \ref{rmk_pos}, the B-ordinates $c_{\varphi(i)-\ell, \ell}$, $\ell=0, \ldots, \varphi(i)-1$ can be expressed in the form (\ref{eq_pos_2}).  They are a blossom of Bernstein polynomials defined on $S_i$ with argument $(\tau, 1-\tau)$. This blossom is non-negative if the barycentric coordinates $(\tau, 1-\tau)$ are non-negative. This holds, if the points in $\mathcal{D}_i$ are all inside $S_i$.

By symmetry, the same condition can be imposed on the set of points $\left\lbrace \frac{\varphi(i)-\ell}{\varphi(i)}v_i+\frac{\ell}{\varphi(i)}\zeta_{i-1},\right. $ $\left.  \, \ell=0, \ldots, \varphi(i)-1\right\rbrace$, in order to ensure the non-negativity of all B-ordinates $c_{\beta}$ of $\mathcal{N}_{i,\alpha}$.   \qed
\end{proof}

\begin{lem}
The B-spline-like functions $\mathcal{N}_{i,\alpha}$, $i=0,\ldots,n$, $|\alpha|=\varphi(i)-1$ form a convex partition of unity, i.e.
\[
\sum_{i=0}^{n}\,\sum_{|\alpha|=\varphi(i)-1}\,\mathcal{N}_{i,\alpha}=1.
\]
\end{lem}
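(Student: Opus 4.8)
The plan is to prove the partition-of-unity property by reducing it, via the BB-representation, to a purely local statement on each subinterval $[\zeta_{i-1},v_i]$ and $[v_i,\zeta_i]$, and then invoking the partition-of-unity property of the Bernstein basis together with the $\mathcal{C}^1$-smoothness condition of Lemma \ref{lemma1}. First I would set $s \equiv 1$ and observe that, since the constant function $1$ lies in $\Ss$, its unique Hermite representation is determined by $1^{(0)}(v_i)=1$ and $1^{(j)}(v_i)=0$ for $j\ge 1$; hence $1=\sum_{i=0}^n \phi_{i,0}$. It therefore suffices to show that $\sum_{|\alpha|=\varphi(i)-1}\mathcal{N}_{i,\alpha}=\phi_{i,0}$ for each fixed $i$, or equivalently, using $\mathcal{N}_{i,\alpha}=\sum_{j=0}^{\varphi(i)-1}\gamma_{i,\alpha}^j\phi_{i,j}$ and the linear independence of the $\phi_{i,j}$, that
\begin{equation*}
\sum_{|\alpha|=\varphi(i)-1}\gamma_{i,\alpha}^{0}=1
\qquad\text{and}\qquad
\sum_{|\alpha|=\varphi(i)-1}\gamma_{i,\alpha}^{j}=0,\quad 1\le j\le\varphi(i)-1.
\end{equation*}

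The second reduction is to the definition (\ref{gamma_values}) of the $\gamma_{i,\alpha}^j$. Summing over $\alpha$ with $|\alpha|=\varphi(i)-1$ and pulling the $\alpha$-independent factor $\binom{\varphi(i)}{j}/\binom{\varphi(i)-1}{j}\,\theta^j$ out front, the claim becomes $\big(\sum_{|\alpha|=\varphi(i)-1}\mathfrak{B}_{\alpha,S_i}^{\varphi(i)-1}\big)^{(j)}(v_i)$ equals $1$ if $j=0$ and $0$ if $1\le j\le \varphi(i)-1$. But $\sum_{|\alpha|=\varphi(i)-1}\mathfrak{B}_{\alpha,S_i}^{\varphi(i)-1}\equiv 1$ is exactly the partition-of-unity property of the Bernstein basis on $S_i$; differentiating a constant gives zero, and evaluating at $v_i$ finishes it. So on the subinterval level the statement is immediate.

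The one genuine subtlety — and what I expect to be the main obstacle — is the bookkeeping that glues the local pictures into a single global identity, i.e. checking that the "leftover" B-ordinate at $\zeta_i$ (the one marked $(\circ)$ in Figure \ref{graphical_represenation_of Thm1}, fixed by the $\mathcal{C}^1$ condition) is consistent with the value $1$ across the split point, and that the basis functions associated with $v_i$ and with $v_{i+1}$ contribute correctly on the shared pieces $[v_i,\zeta_i]$ and $[\zeta_i,v_{i+1}]$. Concretely, on $[v_i,\zeta_i]$ the sum $\sum_i\sum_\alpha \mathcal{N}_{i,\alpha}$ receives contributions both from the functions centered at $v_i$ and from those centered at $v_{i+1}$; one must verify that the totality of these B-ordinates on $[v_i,\zeta_i]$ is the constant vector $(1,\dots,1)$. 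Near $v_i$ this follows from the local Bernstein argument above (the relevant B-ordinates of $\sum_\alpha\mathcal{N}_{i,\alpha}$ restricted to $[v_i,\zeta_i]$ agree with those of the constant $1$, by Remark \ref{rmk_pos} applied with the subdivided Bernstein polynomials summing to $1$); near $v_{i+1}=\zeta_i$ it follows symmetrically from the functions centered at $v_{i+1}$; and the single remaining B-ordinate at $\zeta_i$ is forced to equal $1$ by Lemma \ref{lemma1}, since plugging $\hat c_{(d_2-1,1)}=c_{(1,d_1-1)}=1$ into (\ref{C1_smoothness}) yields $c_{(0,d_1)}=1$. Assembling these observations over all interior subintervals (with the analogous, simpler treatment at the boundary vertices $v_0$ and $v_n$) shows that all B-ordinates of $\sum_i\sum_\alpha\mathcal{N}_{i,\alpha}$ equal $1$, hence the sum is identically $1$. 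Non-negativity was already established in the previous lemma, so the partition of unity is convex, as claimed. \qed
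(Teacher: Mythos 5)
Your proof is correct and follows essentially the same route as the paper: the Bernstein partition of unity on $S_i$ yields $\sum_{|\alpha|=\varphi(i)-1}\gamma_{i,\alpha}^{0}=1$ and $\sum_{|\alpha|=\varphi(i)-1}\gamma_{i,\alpha}^{j}=0$ for $j\ge 1$, and the conclusion then follows from the unisolvence of the Hermite interpolation problem (equivalently, from your identities $\sum_{\alpha}\mathcal{N}_{i,\alpha}=\phi_{i,0}$ and $\sum_{i}\phi_{i,0}=1$). Your final paragraph on B-ordinates and the $\mathcal{C}^1$ condition at $\zeta_i$ is a harmless but unnecessary re-verification, since the $\phi_{i,j}$ are by definition elements of $\Ss$ and the gluing across the split points is therefore already accounted for.
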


\begin{proof}
From the definition of the B-spline-like functions it follows that only $\varphi(i)$ basis functions have function and derivative values at the vertex $v_i$ that are not all zero.\\
 Moreover, the Bernstein basis polynomials in (\ref{gamma_values}) form a partition of unity on $S_i$. \\
Then {\re one can obtain} that
\begin{equation}
\sum_{|\alpha|=\varphi(i)-1}\,\gamma_{i,\alpha}^{0}=1,\quad \text{and}\quad\sum_{|\alpha|=\varphi(i)-1}%
\,\gamma_{i,\alpha}^{j}=0, \quad \quad 0\leq j\leq \varphi(i)-1 . \label{weight_gamma}%
\end{equation}
The proof is completed by considering interpolation problem (\ref{main_problem_interpolation}) and (\ref{weight_gamma}).\qed
\end{proof}

Figure \ref{figure_interior} shows two examples of B-spline like basis functions with respect to an interior vertex $v_i \in \tau_n$: (left) functions $\mathcal{N}_{i,\alpha}$, $\vert \alpha \vert = 2$, associated with the vertex $v_i$, where three data are provided, while two data are assumed to be provided at $v_{i-1}$ and four data are imposed at $v_{i+1}$ (i.e. $\varphi(i)=3, \varphi(i-1)=2, \varphi(i+1)=4$). (Right) functions $\mathcal{N}_{i,\alpha}$, $\vert \alpha \vert = 3$, corresponding to $v_i$, where two data are available at $v_{i-1}$, while seven data are available at $v_{i+1}$ (i.e. $\varphi(i)=4, \varphi(i-1)=2, \varphi(i+1)=7$).

\begin{figure}[!h]
\centering
\includegraphics[scale=0.8]{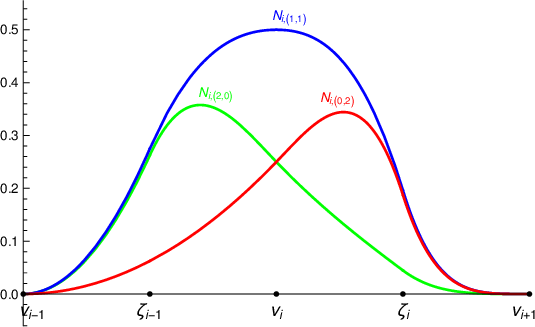}\,\includegraphics[scale=0.8]{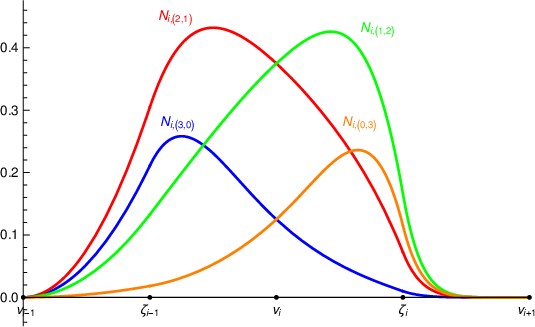}
\caption{Examples of B-spline-like functions associated with an interior vertex $v_i$: (left) The three basis functions corresponding to problem (\ref{main_problem_interpolation}) with $\varphi(i)=3$, $\varphi(i-1)=2$ and $\varphi(i+1)=4$. (right) The four basis functions corresponding to problem (\ref{main_problem_interpolation}) with $\varphi(i)=4, \varphi(i-1)=2, \varphi(i+1)=7$.}\label{figure_interior}
\end{figure}

\begin{rmk}
The construction of boundary B-spline-like basis functions is done according to the same strategy used for interior vertices. Namely, the B-spline-like functions with respect to the boundary vertex $v_0=a$ (resp. $v_n=b$) are constructed with a particular choice of the interval $S_0$ (resp. $S_n$). Namely, $S_{0,1} = v_0 \text{   and   } S_{n,2} = v_n$.
\end{rmk}

In Figure \ref{figure_boundary} we illustrate the typical plots of the boundary B-spline-like functions considered in Figure \ref{figure_interior}.

\begin{figure}[!h]
\centering
\includegraphics[scale=0.8]{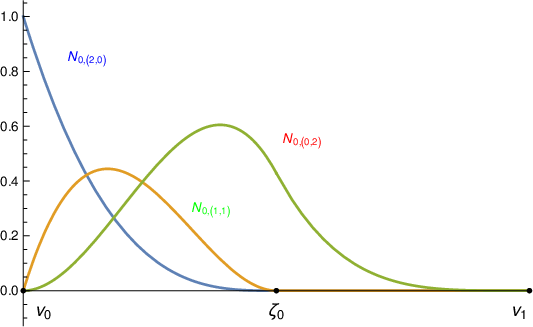} \quad \includegraphics[scale=0.8]{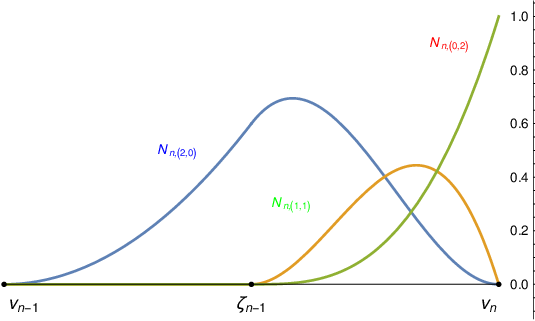}
\includegraphics[scale=0.8]{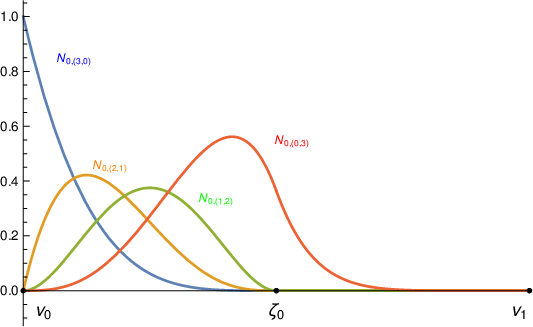} \quad \includegraphics[scale=0.8]{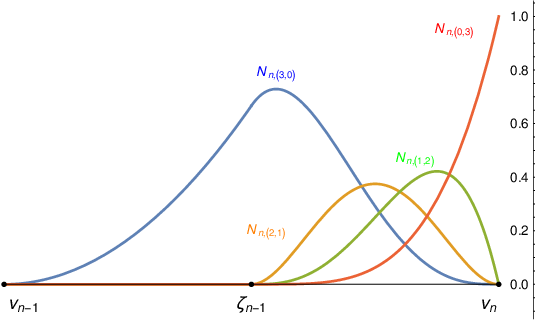}
\caption{Plots of the boundary B-spline-like functions outlined in Figure \ref{figure_interior}: (Top, left) the case of three data at $v_0$ and four data at $v_1$. (Top, right) two data at $v_{n-1}$ and three at $v_n$. (Bottom, left) the case of four data at $v_0$ and seven at $v_1$. (Bottom, right) two data at $v_{n-1}$ and four at $v_n$. }\label{figure_boundary}
\end{figure}
\subsection{B-spline-like representation}
In this subsection, we first start by deriving the coefficients of (\ref{normalized-spline}) to get an interpolating spline.
Suppose that $ s \in \Ss$ is determined by the interpolation problem (\ref{main_problem_interpolation}). Then, the evaluation of $s^{(j)}$, $0 \leq j \leq \varphi(i)-1$, at $v_i$ yields the linear system
\begin{equation}\label{linear_system}
\begin{pmatrix}
\gamma_{i,(\varphi(i)-1,0)}^{0} & \ldots & \gamma_{i,(\varphi(i)-1-\ell,\ell)}^{0} & \ldots & \gamma_{i,(0,\varphi(i)-1)}^{0}\\
\vdots &  & \vdots & &  \vdots\\
\gamma_{i,(\varphi(i)-1,0)}^{j} & \ldots & \gamma_{i,(\varphi(i)-1-\ell,\ell)}^{j} & \ldots & \gamma_{i,(0,\varphi(i)-1)}^{j}\\
\vdots &  & \vdots &  & \vdots\\
\gamma_{i,(\varphi(i)-1,0)}^{\varphi(i)-1} & \ldots & \gamma_{i,(\varphi(i)-1-\ell,\ell)}^{\varphi(i)-1} & \ldots & \gamma_{i,(0,\varphi(i)-1)}^{\varphi(i)-1}%
\end{pmatrix}%
\begin{pmatrix}
\mu_{i,(\varphi(i)-1,0)}\\
\\
\vdots\\
\\
\mu_{i,(0,\varphi(i)-1)}%
\end{pmatrix}
=%
\begin{pmatrix}
f_{i,0}\\
\\
\vdots\\
\\
f_{i,\varphi(i)-1}%
\end{pmatrix}
.
\end{equation}
The definition of parameters $\gamma_{i,\alpha}^{j}$ in (\ref{gamma_values}) includes values and derivatives of Bernstein polynomials. Since they are linear independent, the solution of the linear system is unique.

The polynomial function of degree $\varphi(i)-1$ defined  on $S_i$ with the coefficients $\mu_{i,\alpha}$ as B-ordinates is called the control polynomial related to the vertex $v_i$. It is denoted by $T_i(x)$, and it is expressed as 

\[
T_{i}(v):=\sum_{|\alpha|=\varphi(i)-1}\,\mu_{i,\alpha}\,\mathfrak{B}_{\alpha,\,S_{i}}^{\varphi(i)-1}(v),\quad v\in S_{i}.
\]

The following result holds.
\begin{thm}
 $T_{i}$ is tangent to the spline $s\in \Ss  $ at $v_{i}$.
\end{thm}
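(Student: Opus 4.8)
The plan is to read ``tangency'' in the usual geometric sense --- that $T_i$ and $s$ share the same value \emph{and} the same first derivative at $v_i$ --- and to obtain both quantities directly by combining the Bernstein--B\'ezier form of $T_i$ with the definition (\ref{gamma_values}) of the weights $\gamma_{i,\alpha}^{j}$ and the linear system (\ref{linear_system}) that fixes the coefficients $\mu_{i,\alpha}$. First I would differentiate $T_i(v)=\sum_{|\alpha|=\varphi(i)-1}\mu_{i,\alpha}\,\mathfrak{B}_{\alpha,S_i}^{\varphi(i)-1}(v)$ term by term, so that for every $0\le j\le\varphi(i)-1$
\[
T_i^{(j)}(v_i)=\sum_{|\alpha|=\varphi(i)-1}\mu_{i,\alpha}\,\bigl(\mathfrak{B}_{\alpha,S_i}^{\varphi(i)-1}\bigr)^{(j)}(v_i).
\]

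Next I would invert (\ref{gamma_values}) to write $\bigl(\mathfrak{B}_{\alpha,S_i}^{\varphi(i)-1}\bigr)^{(j)}(v_i)=\dfrac{\binom{\varphi(i)-1}{j}}{\binom{\varphi(i)}{j}}\,\theta^{-j}\,\gamma_{i,\alpha}^{j}$ and substitute this. Since the binomial factor and $\theta^{-j}$ do not depend on $\alpha$, they factor out of the sum, leaving $\sum_{|\alpha|=\varphi(i)-1}\mu_{i,\alpha}\,\gamma_{i,\alpha}^{j}$; but this is exactly the $j$-th row of the system (\ref{linear_system}) for the interpolating spline, hence equal to $f_{i,j}=s^{(j)}(v_i)$. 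Therefore
\[
T_i^{(j)}(v_i)=\frac{\binom{\varphi(i)-1}{j}}{\binom{\varphi(i)}{j}}\,\theta^{-j}\,s^{(j)}(v_i),\qquad 0\le j\le\varphi(i)-1 .
\]
Then I would insert the value $\theta=\dfrac{\varphi(i)-1}{\varphi(i)}$ fixed in (\ref{choiceofpoints}): for $j=0$ the prefactor is $1$, giving $T_i(v_i)=s(v_i)$; for $j=1$ it equals $\dfrac{\varphi(i)-1}{\varphi(i)}\cdot\dfrac{\varphi(i)}{\varphi(i)-1}=1$, giving $T_i'(v_i)=s'(v_i)$. Hence $T_i$ agrees with $s$ to first order at $v_i$, i.e.\ $T_i$ is tangent to $s$ at $v_i$. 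The computation for the boundary vertices $v_0$ and $v_n$ is word for word the same once $S_0$ and $S_n$ are taken as in the boundary remark.

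There is no genuinely hard step here; the only points that need care are the bookkeeping of the $\binom{\cdot}{j}$ and $\theta^{-j}$ factors and the remark that $\sum_{\alpha}\mu_{i,\alpha}\gamma_{i,\alpha}^{j}$ is literally a row of (\ref{linear_system}). As an aside, the displayed identity in fact holds for all $j\le\varphi(i)-1$, and comparing it with the identity of Proposition \ref{prop1} for the degree-$(\varphi(i)-1)$ control polynomial of the polynomial piece $s_{\mid[v_i,\zeta_i]}$ (which has degree $\varphi(i)$) shows that $T_i$ coincides with that control polynomial; from this angle the tangency is just the case $\theta=\frac{d_2}{d_1}=\frac{\varphi(i)-1}{\varphi(i)}$ already recorded after Proposition \ref{prop1}. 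Since the direct computation above is shorter and self-contained, that is the route I would write up.
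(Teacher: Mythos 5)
Your proof is correct and is essentially the paper's own argument: both reduce tangency to the identity $s^{(j)}(v_i)=\sum_{|\alpha|=\varphi(i)-1}\mu_{i,\alpha}\gamma_{i,\alpha}^{j}=T_i^{(j)}(v_i)$ for $j=0,1$, using that the prefactor $\tfrac{\binom{\varphi(i)}{j}}{\binom{\varphi(i)-1}{j}}\theta^{j}$ equals $1$ for these $j$ when $\theta=\tfrac{\varphi(i)-1}{\varphi(i)}$. Your closing aside matches the remark the paper itself appends identifying $T_i$ with the control polynomial of Proposition \ref{prop1}.
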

\begin{proof}
For $s\in \Ss $, it holds
\[
s^{(j)}\left(  v_{i}\right)  =\sum_{|\alpha|=\varphi(i)-1}\,\mu_{i,\alpha}\,\gamma
_{i,\alpha}^{j}=\sum_{|\alpha|=\varphi(i)-1}\,\mu_{i,\alpha}\,\left(\mathfrak{B}_{\alpha,\,S_{i}}%
^{\varphi(i)-1}\right)^{(j)} (v_i)=T_{i}^{(j)}(v_{i}),\quad j=0,1.
\]
and  the claim follows.\qed

{\re \begin{rmk}
As $\theta=\frac{\varphi(i)-1}{\varphi(i)}$, it is worthwhile to mention that the control polynomial $T_i$ coincides with the polynomial $q$ in Proposition \ref{prop1}, given that $p$ represents the restriction of the spline $s$ in a sub-interval containing $v_i$.
\end{rmk}}
\end{proof}
\section{Quasi-interpolants schemes}

In this section, we seek to build some quasi-interpolation operators that map an element of the linear space of piecewise polynomials  to an element of $\Ss$. 

%Define
%\[
%\hat{S}_{i,m}:=\frac{1}{\theta}\,S_{i,m}+\left(\frac{\theta-1}{\theta}\right)v_{i},\quad m=1,2.
%\]
Then, we have the following result. 
 \begin{prop} For any $p \in \mathbb{P}_{\varphi, \overline{\tau}_n}$, let $\mathcal{Q}_{\varphi, \overline{\tau}_n}$ be defined as 
\begin{equation}
\mathcal{Q}_{\varphi, \overline{\tau}_n}\,p=\sum_{i=0}^{n}\,\sum_{|\alpha|=\varphi(i)-1}\,\mathbf{B}\left[p_{\mid [ v_i, \zeta_i]}\right] \left(  v_{i},{\re\zeta_{i-1}[\alpha_{1}],\,\zeta_{i}[\alpha_{2}]}\right)
\,\mathcal{N}_{i,\alpha}. \label{quasi_interpolant}%
\end{equation}
Then, $\mathcal{Q}_{\varphi, \overline{\tau}_n}\,p\in \Ss  $ and
 $\mathcal{Q}_{\varphi, \overline{\tau}_n}\,\tilde{p}=\tilde{p}$ for all  $\tilde{p} \in \mathbb{P}_{\min_{0\leq i \leq n} \varphi (i)}$.
\end{prop}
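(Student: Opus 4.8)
The plan is to treat the two assertions separately. The inclusion $\mathcal{Q}_{\varphi,\overline{\tau}_n}\,p\in\Ss$ is essentially formal, while the reproduction property reduces to a local blossom identity at each vertex $v_i$. For the first part I would simply note that, for $p\in\mathbb{P}_{\varphi,\overline{\tau}_n}$, each restriction $p_{\mid[v_i,\zeta_i]}$ is a polynomial of degree at most $\varphi(i)$, so the blossom value $\mathbf{B}[p_{\mid[v_i,\zeta_i]}](v_i,\zeta_{i-1}[\alpha_1],\zeta_i[\alpha_2])$ — involving $1+\alpha_1+\alpha_2=\varphi(i)$ arguments — is a well-defined real number; hence $\mathcal{Q}_{\varphi,\overline{\tau}_n}\,p$ is a finite linear combination of the functions $\mathcal{N}_{i,\alpha}\in\Ss$, and membership follows because $\Ss$ is a linear space.

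For the reproduction property, set $m:=\min_{0\le i\le n}\varphi(i)$ and fix $\tilde p\in\mathbb{P}_m$. First, $\tilde p\in\Ss$, since on every breakpoint interval it has degree $\le m\le\varphi(i)$ and it is $\mathcal{C}^\infty$; consequently $\tilde p$ has a unique expansion $\tilde p=\sum_{i=0}^{n}\sum_{|\alpha|=\varphi(i)-1}\mu_{i,\alpha}\,\mathcal{N}_{i,\alpha}$ in the B-spline-like basis, so the whole matter reduces to showing, for every $i$ and $\alpha$,
\[
\mu_{i,\alpha}=\mathbf{B}[\tilde p_{\mid[v_i,\zeta_i]}]\!\left(v_i,\zeta_{i-1}[\alpha_1],\zeta_i[\alpha_2]\right),
\]
i.e. that $\mu_{i,\alpha}$ coincides with the coefficient of $\mathcal{N}_{i,\alpha}$ in \eqref{quasi_interpolant}. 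By the definition of the control polynomial $T_i$ of $\tilde p$ at $v_i$, the scalars $\mu_{i,\alpha}$, $|\alpha|=\varphi(i)-1$, are exactly its B-ordinates relative to $S_i$, that is $\mu_{i,\alpha}=\mathbf{B}[T_i](S_{i,1}[\alpha_1],S_{i,2}[\alpha_2])$. Using $\theta=\frac{\varphi(i)-1}{\varphi(i)}$, a comparison of \eqref{gamma_values} with the system \eqref{linear_system} shows that $T_i$ and $\tilde p$ satisfy at $v_i$ the relations of Proposition \ref{prop1} with $d_1=\varphi(i)$, $d_2=\varphi(i)-1$; since a polynomial of degree $\le\varphi(i)-1$ is fixed by its Taylor jet at $v_i$, this identifies $T_i$ with the polynomial $q$ of Proposition \ref{prop1} applied to $p:=\tilde p_{\mid[v_i,\zeta_i]}$ (any breakpoint piece through $v_i$ gives the same $q$, automatically so for a global polynomial). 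Hence, with $g(v):=\tfrac1\theta v+(1-\tfrac1\theta)v_i$,
\[
T_i(v)=\mathbf{B}[\tilde p_{\mid[v_i,\zeta_i]}]\!\left(v_i,\,g(v)[\varphi(i)-1]\right).
\]

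The last step is a blossom manipulation. Setting $q'(x):=\mathbf{B}[\tilde p_{\mid[v_i,\zeta_i]}](v_i,x[\varphi(i)-1])$, which is a polynomial of degree $\le\varphi(i)-1$ by Lemma \ref{lemma_pp1}, one has $T_i=q'\circ g$; by affine invariance of blossoming, $\mathbf{B}[T_i](u_1,\dots,u_{\varphi(i)-1})=\mathbf{B}[q'](g(u_1),\dots,g(u_{\varphi(i)-1}))$, and by Lemma \ref{lemma_pp1} once more $\mathbf{B}[q'](y_1,\dots,y_{\varphi(i)-1})=\mathbf{B}[\tilde p_{\mid[v_i,\zeta_i]}](v_i,y_1,\dots,y_{\varphi(i)-1})$. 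A one-line computation from \eqref{choiceofpoints} gives $g(S_{i,1})=\zeta_{i-1}$, $g(S_{i,2})=\zeta_i$ (with the boundary conventions $g(S_{0,1})=v_0=\zeta_{-1}$ and $g(S_{n,2})=v_n=\zeta_n$), whence $\mu_{i,\alpha}=\mathbf{B}[T_i](S_{i,1}[\alpha_1],S_{i,2}[\alpha_2])=\mathbf{B}[\tilde p_{\mid[v_i,\zeta_i]}](v_i,\zeta_{i-1}[\alpha_1],\zeta_i[\alpha_2])$; summing over $i$ and $\alpha$ then gives $\mathcal{Q}_{\varphi,\overline{\tau}_n}\,\tilde p=\tilde p$.

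I expect the main obstacle to be precisely the bookkeeping of this last step: justifying that the control polynomial of $\tilde p$ is well defined independently of the breakpoint piece chosen, that it is exactly the polynomial $q$ of Proposition \ref{prop1} (which rests on uniqueness of $q$ from its Taylor data), and keeping track of the two applications of Lemma \ref{lemma_pp1} together with the affine reparametrization $g$ and its action on the endpoints of $S_i$. By contrast, the degree hypothesis $\tilde p\in\mathbb{P}_m$ plays only a modest, isolated role — it is needed solely to guarantee $\tilde p\in\Ss$, so that the B-spline-like expansion exists; everything else is purely local at each $v_i$.
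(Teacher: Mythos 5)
Your proof is correct and uses exactly the same machinery as the paper's own argument — the control polynomial $T_i$, Proposition \ref{prop1}, Lemma \ref{lemma_pp1}, and the identity $\tfrac{1}{\theta}S_{i,1}+\bigl(1-\tfrac{1}{\theta}\bigr)v_i=\zeta_{i-1}$ (likewise for $\zeta_i$) — only run in the reverse direction: you identify the B-spline-like coefficients of $\tilde p$ with the blossom values, whereas the paper shows that $\mathcal{Q}_{\varphi,\overline{\tau}_n}\,p$ matches the Hermite jet of any $p\in\mathbb{P}_{\varphi,\overline{\tau}_n}$ at each $v_i$ and then invokes unisolvency of problem (\ref{main_problem_interpolation}). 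This is a reorganization of the same proof rather than a genuinely different route, so no further changes are needed.
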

\begin{proof}
Let 
\[
s(v)=\sum_{i=0}^{n}\,\sum_{|\alpha|=\varphi(i)-1}\,\mathbf{B}\left[  p_{\mid [ v_i, \zeta_i]}\right]
\left(  v_{i},{\re \zeta_{i-1}[\alpha_{1}],\,\zeta_{i}[\alpha_{2}]}\right)
\,\mathcal{N}_{i,\alpha}\left(v\right).
\]
 We will prove that,
\[
s^{(j)} (v_i) = p^{(j)} (v_i),\quad 0\leq j \leq \varphi(i)-1.
\]
It is clear that 
\[
s(v_i)=\sum_{|\alpha|=\varphi(i)-1}\,\mathbf{B}\left[  p_{\mid [ v_i, \zeta_i]}\right]
\left(  v_{i},{\re\zeta_{i-1}[\alpha_{1}],\,\zeta_{i}[\alpha_{2}]}\right)
\,\mathcal{N}_{i,\alpha}(v_i).
\]
Define 
\[
q_i(v)=\sum_{|\alpha|=\varphi(i)-1}\,\mathbf{B}\left[  p_{\mid [ v_i, \zeta_i]}\right]
\left(  v_{i},{\re\zeta_{i-1}[\alpha_{1}],\,\zeta_{i}[\alpha_{2}]}\right)
\,\mathcal{N}_{i,\alpha}(v).
\]
For all $0 \leq j \leq \varphi(i)-1$, it holds
\[
q^{(j)}_{i}(v_i)=\frac{%
\begin{pmatrix}
\varphi(i)\\
j
\end{pmatrix}
}{%
\begin{pmatrix}
\varphi(i)-1\\
j
\end{pmatrix}
}\,\theta^{j}\sum_{|\alpha|=\varphi(i)-1}\,\mathbf{B}\left[  p_{\mid [ v_i, \zeta_i]}\right]
\left(  v_{i},{\re\zeta_{i-1}[\alpha_{1}],\,\zeta_{i}[\alpha_{2}]}\right)
\,\left(\mathfrak{B}_{\alpha,\,S_{i}}^{\varphi(i)-1}\right)^{(j)} (v_i).
\]
Now, we consider the notion of control polynomial developed in the Section \ref{pre}.\\
Let 
\[
{\re T_{i}(v)}=\mathbf{B}\left[ p_{\mid [ v_i, \zeta_i]}\right]
\left( v_i,\left(\frac{1}{\theta}\, v+\frac{\theta-1}{\theta}\,v_i\right) [\varphi(i)-1]\right)
\]
be the control polynomial of degree $\varphi(i)-1$ at the vertex $v_i$. The polynomial ${\re T_{i}(v)}$ can be written on $[S_{i,1},S_{i,2}]$ as
\[
{\re T_{i}(v)}=\sum_{|\alpha|=\varphi(i)-1}\,\mathbf{B}\left[  T_{i}\right]
\left( S_{i,1}[\alpha_{1}],\,S_{i,2}[\alpha_{2}]\right)\,
\mathfrak{B}_{\alpha,\,S_{i}}^{\varphi(i)-1}(v).
\]
According to Lemma \ref{lemma_pp1}, we have
\[
{\re T_{i}(v)}=\sum_{|\alpha|=\varphi(i)-1}\,\mathbf{B}\left[  p_{\mid [ v_i, \zeta_i]}\right]
\left(v_i, {\re \left(\frac{1}{\theta}\,S_{i,1}+\left(\frac{\theta-1}{\theta}\right)v_{i}\right)\left[ \alpha_{1}\right] ,\,\left(\frac{1}{\theta}\,S_{i,2}+\left(\frac{\theta-1}{\theta}\right)v_{i}\right)\left[ \alpha_{2}\right] }\right)\,
\mathfrak{B}_{\alpha,\,S_{i}}^{\varphi(i)-1}(v).
\]
{\re A straightforward computation shows that   $\zeta_{i-1} = \frac{1}{\theta}\,S_{i,1}+\left(\frac{\theta-1}{\theta}\right)v_{i}$ and  $\zeta_{i} = \frac{1}{\theta}\,S_{i,2}+\left(\frac{\theta-1}{\theta}\right)v_{i}$ , which leads to the following,
\[
T_{i}(v)=\sum_{|\alpha|=\varphi(i)-1}\,\mathbf{B}\left[  p_{\mid [ v_i, \zeta_i]}\right]
\left(v_i, \zeta_{i-1}[\alpha_{1}],\,\zeta_{i}[\alpha_{2}]\right)\,
\mathfrak{B}_{\alpha,\,S_{i}}^{\varphi(i)-1}(v),
\]}
By using Proposition \ref{prop1}, we deduce that
\[
 p^{(j)}(v_{i})= \theta ^{j}\,\frac{\binom{\varphi(i)}{j}}{\binom
{\varphi(i)-1}{j}}T_{i}^{(j)}(v_i)=q_{i}^{(j)}(v_{i})=\,s^{(j)}(v_{i}),\quad 0\leq j \leq \varphi(i)-1
\]
which concludes the proof. \qed
\end{proof}

Next, we define from (\ref{quasi_interpolant}) a family of quasi-interpolation operators $\mathcal{Q}_{\varphi, \overline{\tau}_n}$ of the form
\begin{equation}
\mathcal{Q}_{\varphi, \overline{\tau}_n}f:=\sum_{i=0}^{n}\,\sum_{\vert \alpha \vert=\varphi\left(i\right)-1}\,\nu_{i,\alpha}\left(
f\right)  \,\mathcal{N}_{i,\alpha}. \label{form1}%
\end{equation}
The linear functionals $\nu_{i,\alpha}$ are defined such that $\mathcal{Q}_{\varphi, \overline{\tau}_n}$ meets the property:

\begin{equation}
 \mathcal{Q}_{\varphi, \overline{\tau}_n}\, \tilde{p}=\tilde{p} \text{  for all  } \tilde{p} \in \mathbb{P}_{\min_{0\leq i \leq n} \varphi (i)}.
 \label{form2}%
\end{equation}

\subsection{Differential quasi-interpolation operator}

We begin with the result that shows the relationship between blossoming and directional derivatives according to \cite{Seidel}.
\begin{prop}
Let $u$, $v$, $w$ be three points in $\mathbb{R}$ and $ p_i \in \mathbb{P}_{\varphi(i)},\,i=0,\dots,n-1 $.Then,
\begin{equation}\label{polarization_der}
 \mathbf{B}\left[  p_i\right]  \left(  u ,v\left[  \alpha_{1}\right] ,w\left[  \alpha_{2}\right] \right)
=\sum_{k=0}^{\varphi(i)-1} \frac{(k+1)!}{\varphi(i)!}\sum_{\alpha_1 + \alpha_2 = \varphi(i)-1-k }
\delta^{\alpha_1}\, \hat{\delta}^{\alpha_2}  p^{(\alpha_1+\alpha_2)}\left(u\right),
\end{equation}
%\begin{equation}\label{polarization_der}
%{\re \mathbf{B}\left[  p_i\right]  \left(  u ,v\left[  \alpha_{1}\right] ,w\left[  \alpha_{2}\right] \right)
%=\sum_{k=0}^{\varphi(i)-1} \frac{(k+1)!}{\varphi(i)!}\sum_{\underset{|S|=\varphi(i)-1-k}{S\subset\left\{ \delta \left[\alpha_1\right],\delta^{\prime} \left[\alpha_2\right]\right\}  }%
%}\,D_{S}\,p\left(u\right),}
%\end{equation}
where $0\leq \alpha_1, \alpha_2 \leq \varphi(i)-1$, $\alpha_1+\alpha_2=\varphi(i)-1$, $\delta:= v-u$ and $\hat{\delta}= w-u$.
\end{prop}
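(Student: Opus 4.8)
The plan is to combine the linearity of the blossom $\mathbf{B}[\cdot]$ in its polynomial argument with its characterization, recalled in Section \ref{pre}, as the unique symmetric, multiaffine map satisfying the diagonal property. Writing $p:=p_{i}\in\mathbb{P}_{\varphi(i)}$ in Taylor form at $u$,
\[
p(x)=\sum_{k=0}^{\varphi(i)}\frac{p^{(k)}(u)}{k!}\,(x-u)^{k},
\]
it suffices, by linearity, to evaluate $\mathbf{B}[(x-u)^{k}]$ at $\left(u,v[\alpha_{1}],w[\alpha_{2}]\right)$ for each $0\le k\le\varphi(i)$ and then to recombine.

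The key step is the identification, for the shifted monomial $(x-u)^{k}$ regarded as an element of $\mathbb{P}_{\varphi(i)}$,
\[
\mathbf{B}[(x-u)^{k}]\left(t_{1},\dots,t_{\varphi(i)}\right)=\binom{\varphi(i)}{k}^{-1}e_{k}\!\left(t_{1}-u,\dots,t_{\varphi(i)}-u\right),
\]
where $e_{k}$ denotes the $k$-th elementary symmetric polynomial in its arguments. This follows from the uniqueness of the blossom: the right-hand side is symmetric, is affine in each $t_{\ell}$ separately because $e_{k}$ has degree at most one in every variable, and reduces on the diagonal $t_{1}=\dots=t_{\varphi(i)}=x$ to $\binom{\varphi(i)}{k}^{-1}e_{k}(x-u,\dots,x-u)=(x-u)^{k}$. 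I expect this identification, although entirely classical, to be the main point of the argument; everything afterwards is bookkeeping.

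It remains to specialize the arguments to $t_{1}=u$ together with $\alpha_{1}$ copies of $v$ and $\alpha_{2}$ copies of $w$. The slot $t_{1}-u=0$ annihilates every monomial of $e_{k}$ that contains it, so $e_{k}$ collapses to $e_{k}(\delta[\alpha_{1}],\hat{\delta}[\alpha_{2}])$, which is the coefficient of $z^{k}$ in $(1+\delta z)^{\alpha_{1}}(1+\hat{\delta}z)^{\alpha_{2}}$ and hence a sum of terms $\delta^{\beta_{1}}\hat{\delta}^{\beta_{2}}$ with $\beta_{1}+\beta_{2}=k$; in particular it vanishes as soon as $k>\alpha_{1}+\alpha_{2}=\varphi(i)-1$. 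Substituting this back into the Taylor expansion, using $\frac{1}{k!\binom{\varphi(i)}{k}}=\frac{(\varphi(i)-k)!}{\varphi(i)!}$, and reindexing $k\mapsto\varphi(i)-1-k$ (which turns $(\varphi(i)-k)!$ into $(k+1)!$ and leaves in the inner sums only derivatives $p^{(\beta_{1}+\beta_{2})}(u)$ with $\beta_{1}+\beta_{2}\le\varphi(i)-1$) yields the expansion (\ref{polarization_der}); a direct check on the basis $\left\{(x-u)^{k}\right\}_{k}$ of $\mathbb{P}_{\varphi(i)}$ confirms the normalizing constants $\frac{(k+1)!}{\varphi(i)!}$.
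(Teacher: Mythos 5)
The paper does not actually prove this proposition; it is quoted from the polar-form literature (\cite{Seidel}), so there is no in-paper argument to compare yours against. Judged on its own, your derivation is the standard one and is sound: the identification $\mathbf{B}[(x-u)^{k}](t_{1},\dots,t_{\varphi(i)})=\binom{\varphi(i)}{k}^{-1}e_{k}(t_{1}-u,\dots,t_{\varphi(i)}-u)$ is correctly justified by symmetry, multiaffinity, the diagonal property and uniqueness of the blossom; setting $t_{1}=u$ does annihilate every monomial of $e_{k}$ containing that slot; and the bookkeeping $\frac{1}{k!\,\binom{\varphi(i)}{k}}=\frac{(\varphi(i)-k)!}{\varphi(i)!}$ together with the reindexing $k\mapsto\varphi(i)-1-k$ produces the constants $\frac{(k+1)!}{\varphi(i)!}$ and the truncation of the sum at $k=\varphi(i)-1$.

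One point must be made explicit before you may assert that this ``yields (\ref{polarization_der})''. What your computation actually produces for the inner sum is $e_{\varphi(i)-1-k}(\delta[\alpha_{1}],\hat{\delta}[\alpha_{2}])=\sum_{\beta_{1}+\beta_{2}=\varphi(i)-1-k}\binom{\alpha_{1}}{\beta_{1}}\binom{\alpha_{2}}{\beta_{2}}\,\delta^{\beta_{1}}\hat{\delta}^{\beta_{2}}$, i.e.\ each power product carries the multiplicity with which it arises from sub-multisets of the argument list (and the constraints $\beta_{1}\leq\alpha_{1}$, $\beta_{2}\leq\alpha_{2}$ are enforced by vanishing binomials). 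The proposition as printed reuses $\alpha_{1},\alpha_{2}$ as inner summation indices and displays no such weights; your result agrees with it only if its inner sum is read as a sum over sub-multisets of $\{v[\alpha_{1}],w[\alpha_{2}]\}$ of cardinality $\varphi(i)-1-k$. The unweighted reading is genuinely different: for $\varphi(i)=3$, $p(x)=(x-u)^{2}$ and arguments $(u,v,w)$, the blossom value is $\frac{1}{3}\delta\hat{\delta}$, which is what the weighted sum gives, whereas the unweighted sum gives $\frac{1}{3}(\delta^{2}+\delta\hat{\delta}+\hat{\delta}^{2})$. So your proof is correct, but the final sentence papers over this discrepancy: either record the binomial weights explicitly or state the sub-multiset convention under which the displayed formula is to be understood.
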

From the functional defined as 
\[
 \mathbf{N}\left[  f\right]  \left(  u ,v\left[  \alpha_{1}\right] ,w\left[  \alpha_{2}\right] \right)
=\sum_{k=0}^{\varphi(i)-1} \frac{(k+1)!}{\varphi(i)!}\sum_{\alpha_1 + \alpha_2 = \varphi(i)-1-k }
\delta^{\alpha_1}\, \hat{\delta}^{\alpha_2}  p^{(\alpha_1+\alpha_2)}\left(u\right),
\]
we define linear functionals providing differential quasi-interpolation operators.
\begin{Cor}
Define the functional
\begin{equation}
\nu_{i,\alpha}\left(  f\right)  :=\mathbf{N}\left[  f\right]
\left(  v_{i},{\re\zeta_{i-1}[\alpha_{1}],\,\zeta_{i}[\alpha_{2}]}\right) . \label{df_qi}%
\end{equation}
Then, the operator $\mathcal{Q}_{\varphi, \overline{\tau}_n}$ defined by (\ref{form1}) satisfies (\ref{form2}).

\end{Cor}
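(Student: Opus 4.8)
The plan is to reduce the corollary to the reproduction property already established for the blossom-based operator of the preceding proposition, by showing that the functional $\mathbf{N}[f]$ agrees with the blossom $\mathbf{B}[p_i]$ whenever $f$ coincides with a polynomial $p_i \in \mathbb{P}_{\varphi(i)}$ on the relevant piece. Indeed, formula (\ref{polarization_der}) is precisely the statement that, for $f = p_i$, the right-hand side of the definition of $\mathbf{N}[f](u, v[\alpha_1], w[\alpha_2])$ evaluates to $\mathbf{B}[p_i](u, v[\alpha_1], w[\alpha_2])$. Hence $\mathbf{N}$ is a polynomial extension of the blossom: it is defined for arbitrary smooth $f$ via the derivative expression, but restricts to the true blossom on polynomials of degree at most $\varphi(i)$.

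First I would fix a polynomial $\tilde p \in \mathbb{P}_{\min_{0\le i \le n}\varphi(i)}$ and note that, on each piece $[v_i,\zeta_i]$, one has $\tilde p \in \mathbb{P}_{\varphi(i)}$, so $\tilde p_{\mid [v_i,\zeta_i]}$ is a legitimate argument for Proposition providing (\ref{polarization_der}). Applying that identity with $u = v_i$, $v = \zeta_{i-1}$, $w = \zeta_i$, $\delta = \zeta_{i-1}-v_i$, $\hat\delta = \zeta_i - v_i$, I get
\[
\mathbf{N}\left[\tilde p\right]\left(v_i, \zeta_{i-1}[\alpha_1], \zeta_i[\alpha_2]\right) = \mathbf{B}\left[\tilde p_{\mid [v_i,\zeta_i]}\right]\left(v_i, \zeta_{i-1}[\alpha_1], \zeta_i[\alpha_2]\right),
\]
that is, $\nu_{i,\alpha}(\tilde p) = \mathbf{B}[\tilde p_{\mid [v_i,\zeta_i]}](v_i,\zeta_{i-1}[\alpha_1],\zeta_i[\alpha_2])$ for every $i$ and every $\alpha$ with $|\alpha| = \varphi(i)-1$. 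Substituting these coefficients into (\ref{form1}) reproduces exactly the operator $\mathcal{Q}_{\varphi,\overline{\tau}_n}$ of (\ref{quasi_interpolant}) evaluated at $\tilde p$, and the preceding proposition already guarantees $\mathcal{Q}_{\varphi,\overline{\tau}_n}\tilde p = \tilde p$ for all $\tilde p \in \mathbb{P}_{\min_{0\le i\le n}\varphi(i)}$. This closes the argument.

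The only point requiring a little care — and the step I expect to be the main (if modest) obstacle — is the bookkeeping that the right-hand side of (\ref{df_qi}) is being read with the index $i$ tied to the vertex $v_i$, so that the "$\varphi(i)$" appearing inside $\mathbf{N}$ matches the degree of the piece $p_{\mid [v_i,\zeta_i]}$; one must check that the definition of $\mathbf{N}[f]$ is consistent with this identification and that $\tilde p$, having degree $\le \min_j \varphi(j) \le \varphi(i)$, indeed lies in the space $\mathbb{P}_{\varphi(i)}$ for which (\ref{polarization_der}) is valid. Once this matching is made explicit, the corollary follows immediately from the Proposition on $\mathbf{Q}_{\varphi,\overline{\tau}_n}$ together with identity (\ref{polarization_der}), with no further computation. \qed
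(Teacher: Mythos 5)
Your proposal is correct and follows the same route as the paper: the paper's proof also consists of observing that $\mathbf{N}[p_i]$ coincides with the blossom $\mathbf{B}[p_i]$ for polynomials $p_i \in \mathbb{P}_{\varphi(i)}$ (via the polarization--derivative identity) and then invoking the reproduction property of the blossom-based operator from the preceding proposition. Your additional bookkeeping about matching $\varphi(i)$ to the degree of the piece is a sensible elaboration of what the paper leaves implicit.
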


\begin{proof}
It is enough to notice that
\[
\mathbf{N}\left[  p_i\right]
\left(  v_{i},{\re\zeta_{i-1}[\alpha_{1}],\,\zeta_{i}[\alpha_{2}]}\right)  =\mathbf{B}\left[    p_i\right]
\left(  v_{i},{\re\zeta_{i-1}[\alpha_{1}],\,\zeta_{i}[\alpha_{2}]}\right),
\]
for all $ p_i =p_{\mid [v_{i},\zeta_i]}\in \mathbb{P}_{\varphi(i)},\,i=0,\dots,n-1$, and $p \in \mathbb{P}_{\varphi, \overline{\tau}_n}$.\qed

\end{proof}

\subsection{Quasi-interpolation based on point values}

The construction of differential quasi-interpolants requires derivative information and given data values. In general, such information on derivatives is only sometimes available in practice. For this reason, we construct quasi-interpolants based on point evaluators in this subsection.

Let $t_{i,\alpha}^{\ell}$, $\ell=0,\dots,\varphi(i)$, be $\varphi(i) +1 $ distinct points in the neighborhood of $v_i$. Then, there exist a Lagrange basis $L_{i,\alpha}^j$ of degree $\varphi(i)$ such that $L_{i,\alpha}^j \left( t_{i, \alpha}^{\ell} \right) =\delta_{j, \ell}$, $j, \ell=0,\dots,\varphi(i)$, and the polynomial 
\begin{equation}\label{QI_pv}
\mathcal{I}_{i, \alpha} f(x):=\sum_{\ell=0}^{\varphi\left(i\right)}\,f(t_{i, \alpha}^{\ell})\,\mathit{L}_{i, \alpha}^\ell,
\end{equation}
interpolates $f$ at the points $ t_{i,\alpha}^{\ell}$, $\ell=0,\ldots, \varphi(i)$.

Consequently, we have the following result.
\begin{prop}
The quasi-interpolant $\mathcal{Q}_{\varphi, \overline{\tau}_n} f$ in (\ref{form1}) with
\begin{equation}
\nu_{i,\alpha} (f) = \sum_{\ell=0}^{\varphi(i)}\, f\left( t_{i, \alpha}^{\ell}\right) \mathbf{B}\left[\mathit{L_{i, \alpha}^\ell}\right]\left(  v_{i},{\re\zeta_{i-1}[\alpha_{1}],\,\zeta_{i}[\alpha_{2}]}\right),
\end{equation}
satisfies (\ref{form2}).
\end{prop}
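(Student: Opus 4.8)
The plan is to reduce this statement to the already-established reproduction property of $\mathcal{Q}_{\varphi, \overline{\tau}_n}$ on polynomials, by showing that the proposed functionals $\nu_{i,\alpha}$ agree with the blossom functionals $\mathbf{B}\left[p_{\mid [v_i,\zeta_i]}\right]\left(v_i,\zeta_{i-1}[\alpha_1],\zeta_i[\alpha_2]\right)$ whenever $f$ is a polynomial of sufficiently low degree. Concretely, fix $\tilde p \in \mathbb{P}_{\min_{0\leq i \leq n}\varphi(i)}$. Since $\deg \tilde p \leq \varphi(i)$ for every $i$, the Lagrange interpolant $\mathcal{I}_{i,\alpha}\tilde p$ based on the $\varphi(i)+1$ distinct nodes $t_{i,\alpha}^\ell$ reproduces $\tilde p$ exactly, i.e. $\mathcal{I}_{i,\alpha}\tilde p = \tilde p$ as polynomials. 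First I would write out
\[
\nu_{i,\alpha}(\tilde p)=\sum_{\ell=0}^{\varphi(i)} \tilde p\bigl(t_{i,\alpha}^\ell\bigr)\,\mathbf{B}\bigl[L_{i,\alpha}^\ell\bigr]\bigl(v_i,\zeta_{i-1}[\alpha_1],\zeta_i[\alpha_2]\bigr),
\]
and then invoke linearity of the blossom in the polynomial argument (the blossom $p\mapsto \mathbf{B}[p]$ is a linear map for fixed arguments) to pull the sum inside:
\[
\nu_{i,\alpha}(\tilde p)=\mathbf{B}\Bigl[\textstyle\sum_{\ell=0}^{\varphi(i)} \tilde p\bigl(t_{i,\alpha}^\ell\bigr) L_{i,\alpha}^\ell\Bigr]\bigl(v_i,\zeta_{i-1}[\alpha_1],\zeta_i[\alpha_2]\bigr)=\mathbf{B}\bigl[\mathcal{I}_{i,\alpha}\tilde p\bigr]\bigl(v_i,\zeta_{i-1}[\alpha_1],\zeta_i[\alpha_2]\bigr).
\]

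Next I would use $\mathcal{I}_{i,\alpha}\tilde p = \tilde p$ to conclude $\nu_{i,\alpha}(\tilde p)=\mathbf{B}\bigl[\tilde p\bigr]\bigl(v_i,\zeta_{i-1}[\alpha_1],\zeta_i[\alpha_2]\bigr)$. One subtlety to address carefully: the blossom appearing in the definition of $\mathcal{Q}_{\varphi,\overline{\tau}_n}$ in \eqref{quasi_interpolant} is the blossom of $p_{\mid[v_i,\zeta_i]}$ viewed as a degree-$\varphi(i)$ polynomial, whereas $L_{i,\alpha}^\ell$ also has degree $\varphi(i)$, so all blossoms here are taken with respect to the common degree $\varphi(i)$ and the identity $\mathcal{I}_{i,\alpha}\tilde p=\tilde p$ together with linearity is legitimate. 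Hence, for the given $\tilde p$, the coefficients $\nu_{i,\alpha}(\tilde p)$ coincide exactly with the coefficients $\mathbf{B}\bigl[\tilde p_{\mid[v_i,\zeta_i]}\bigr]\bigl(v_i,\zeta_{i-1}[\alpha_1],\zeta_i[\alpha_2]\bigr)$ used in \eqref{quasi_interpolant}, so $\mathcal{Q}_{\varphi,\overline{\tau}_n}\tilde p$ reduces to the operator of Proposition with $p=\tilde p$, which was already shown to satisfy $\mathcal{Q}_{\varphi,\overline{\tau}_n}\tilde p=\tilde p$. This proves \eqref{form2}.

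The only genuine point requiring care — the main (mild) obstacle — is making the degree bookkeeping airtight: one must ensure that $\tilde p \in \mathbb{P}_{\min_i \varphi(i)}$ indeed lies in $\mathbb{P}_{\varphi(i)}$ for every $i$ so that both the Lagrange reproduction $\mathcal{I}_{i,\alpha}\tilde p=\tilde p$ holds (which needs $\deg\tilde p\leq\varphi(i)$, guaranteed by $\varphi(i)\geq\min_j\varphi(j)\geq\deg\tilde p$) and so that the restriction $\tilde p_{\mid[v_i,\zeta_i]}$ is a legitimate element of $\mathbb{P}_{\varphi,\overline{\tau}_n}$ to which the earlier Proposition applies. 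Beyond that, the argument is a direct chain: Lagrange exactness, linearity of the blossom, and the previously proven reproduction property.

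\begin{proof}
Let $\tilde p \in \mathbb{P}_{\min_{0\leq i \leq n}\varphi(i)}$. For each $i$ we have $\deg \tilde p \leq \min_{0\leq j\leq n}\varphi(j) \leq \varphi(i)$, so $\tilde p \in \mathbb{P}_{\varphi(i)}$. Since $\mathcal{I}_{i,\alpha}$ interpolates at the $\varphi(i)+1$ distinct points $t_{i,\alpha}^\ell$ and reproduces $\mathbb{P}_{\varphi(i)}$, it follows that $\mathcal{I}_{i,\alpha}\tilde p = \tilde p$ identically. Using \eqref{QI_pv} and the linearity of the blossom $p \mapsto \mathbf{B}[p]$ (for fixed arguments, and with all polynomials regarded as elements of $\mathbb{P}_{\varphi(i)}$), we obtain
\[
\nu_{i,\alpha}(\tilde p)=\sum_{\ell=0}^{\varphi(i)} \tilde p\bigl(t_{i,\alpha}^\ell\bigr)\,\mathbf{B}\bigl[L_{i,\alpha}^\ell\bigr]\bigl(v_{i},\zeta_{i-1}[\alpha_{1}],\,\zeta_{i}[\alpha_{2}]\bigr)=\mathbf{B}\Bigl[\sum_{\ell=0}^{\varphi(i)} \tilde p\bigl(t_{i,\alpha}^\ell\bigr) L_{i,\alpha}^\ell\Bigr]\bigl(v_{i},\zeta_{i-1}[\alpha_{1}],\,\zeta_{i}[\alpha_{2}]\bigr).
\]
Because $\sum_{\ell=0}^{\varphi(i)} \tilde p(t_{i,\alpha}^\ell) L_{i,\alpha}^\ell = \mathcal{I}_{i,\alpha}\tilde p = \tilde p = \tilde p_{\mid[v_i,\zeta_i]}$, this yields
\[
\nu_{i,\alpha}(\tilde p)=\mathbf{B}\bigl[\tilde p_{\mid[v_i,\zeta_i]}\bigr]\bigl(v_{i},\zeta_{i-1}[\alpha_{1}],\,\zeta_{i}[\alpha_{2}]\bigr).
\]
Thus the coefficients of $\mathcal{Q}_{\varphi,\overline{\tau}_n}\tilde p$ in \eqref{form1} coincide with those of the operator in \eqref{quasi_interpolant} applied to $p=\tilde p \in \mathbb{P}_{\varphi, \overline{\tau}_n}$. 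By the preceding Proposition, that operator reproduces every polynomial in $\mathbb{P}_{\min_{0\leq i \leq n}\varphi(i)}$, hence $\mathcal{Q}_{\varphi, \overline{\tau}_n}\,\tilde p = \tilde p$, which is \eqref{form2}.\qed
\end{proof}
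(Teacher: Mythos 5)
Your proof is correct and follows exactly the route the paper intends: the paper states this proposition without proof as an immediate consequence of the Lagrange reproduction property and the earlier proposition on $\mathcal{Q}_{\varphi,\overline{\tau}_n}p$, and your argument (exactness of $\mathcal{I}_{i,\alpha}$ on $\mathbb{P}_{\varphi(i)}$, linearity of the blossom at fixed degree $\varphi(i)$, then invoking the reproduction result for \eqref{quasi_interpolant}) is precisely that chain, with the degree bookkeeping made explicit.
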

For instance, we give some examples of discrete quasi-interpolation operators based on point values for a uniform partition. Consider the case of $\varphi(2 i)=3$ and $\varphi(2 i+1)=4$. For each vertex $v_i$, we should define a polynomial $\mathcal{I}_{i, \alpha} = \mathcal{I}_{i}$ of degree $\varphi (i)$ which interpolates in the neighbourhood of $v_i$. In the general construction, we can define a polynomial $\mathcal{I}_{i, \alpha}$ of degree $\varphi(i)$ for each of the functional $\nu_{i,\alpha}$, but for the sake of simplicity, we choose the same interpolation polynomial $\mathcal{I}_{i}$ for the all functionals $\nu_{i, \alpha}$

\begin{itemize}
\item[1)] Case $\varphi(i)=3$: The polynomial $\mathcal{I}_{i} $  interpolates the points $v_{i-1},\,\zeta_{i-1},\,v_{i},\,\zeta_{i}$, i.e.,
\[
\mathcal{I}_{i} f(x) = f(v_{i-1}) L_{i}^1(x) + f(\zeta_{i-1}) L_{i}^2(x) + f(v_{i}) L_{i}^3(x) + f(\zeta_{i}) L_{i}^4(x),
\]
where,
\begin{align*}
L_{i}^1(x) =& \frac{(x-\zeta_{i-1})(x-v_{i})(x-\zeta_{i})}{(v_{i-1}-\zeta_{i-1})(v_{i-1}-v_{i})(v_{i-1}-\zeta_{i})},\quad L_{i}^2(x) = \frac{(x-v_{i-1})(x-v_{i})(x-\zeta_{i})}{(\zeta_{i-1}-v_{i-1})(\zeta_{i-1}-v_{i})(\zeta_{i-1}-\zeta_{i})}\\
L_{i}^3(x) =& \frac{(x-v_{i-1})(x-\zeta_{i-1})(x-\zeta_{i})}{(v_{i}-v_{i-1})(v_{i}-\zeta_{i-1})(v_{i}-\zeta_{i})},\quad L_{i}^4(x) = \frac{(x-v_{i-1})(x-\zeta_{i-1})(x-v_{i})}{(\zeta_{i}-v_{i-1})(\zeta_{i}-\zeta_{i-1})(\zeta_{i}-v_{i})}.
\end{align*}
Which yields,
\begin{align*}
\nu_{i,(2,0)}\left(  f\right)   &  =\frac{-2}{18}f\left(  v_{i-1}\right)+\frac{15}{18}f\left(  \zeta_{i-1}\right)+\frac{6}{18}f\left(  v_{i}\right)-\frac{1}{18}f\left(  \zeta_{i}\right)  ,\\
\nu_{i,(1,1)}\left(  f\right)   &  =\frac{-1}{6}f\left(  \zeta_{i-1}\right)+\frac{8}{6}f\left(  v_{i}\right)-\frac{1}{6}f\left(  \zeta_{i}\right)  ,\\
\nu_{i,(0,2)}\left(  f\right)   &  =\frac{1}{9}f\left(  v_{i-1}\right)-\frac{1}{2}f\left(  \zeta_{i-1}\right)+f\left(  v_{i}\right)+\frac{7}{18}f\left(  \zeta_{i}\right)  .
\end{align*}
Similarly, one can compute the functionals $\nu_{i,\alpha}$ in the case of $\varphi(i)=4$.
\item[2)] Case $\varphi(i)=4$: The interpolation points $v_{i-1},\,\zeta_{i-1},\,v_{i},\,\zeta_{i}, v_{i+1}$ are considered. It results,
\begin{align*}
\nu_{i, (3,0)}\left(  f\right)   &  =\frac{-3}{48}f\left(  v_{i-1}\right)+\frac{38}{48}f\left(  \zeta_{i-1}\right)+\frac{18}{48}f\left(  v_{i}\right)-\frac{6}{48}f\left(  \zeta_{i}\right)+\frac{1}{48}f\left(  v_{i+1}\right)  ,\\
\nu_{i, (2,1)}\left(  f\right)   &  =\frac{-5}{144}f\left(  v_{i-1}\right)+\frac{14}{144}f\left(  \zeta_{i-1}\right)+\frac{174}{144}f\left(  v_{i}\right)-\frac{46}{144}f\left(  \zeta_{i}\right)+\frac{7}{144}f\left(  v_{i+1}\right)   ,\\
\nu_{i, (1,2)}\left(  f\right)   &  =\frac{7}{144}f\left(  v_{i-1}\right)-\frac{46}{144}f\left(  \zeta_{i-1}\right)+\frac{174}{144}f\left(  v_{i}\right)+\frac{14}{144}f\left(  \zeta_{i}\right)-\frac{5}{144}f\left(  v_{i+1}\right)  ,\\
\nu_{i, (0,3)}\left(  f\right)   &  =\frac{1}{48}f\left(  v_{i-1}\right)-\frac{6}{48}f\left(  \zeta_{i-1}\right)+\frac{18}{48}f\left(  v_{i}\right)+\frac{38}{48}f\left(  \zeta_{i}\right)-\frac{3}{48}f\left(  v_{i+1}\right)  .
\end{align*}
\end{itemize}
The value of the polar forms in a set of points can be calculated using derivative formulas (\ref{polarization_der}), or even using a discrete polarisation formula, which we will recall in the next subsection.

\subsection{Discrete quasi-interpolation operator based on polarization}

Polarization formula with constant coefficients can be obtained as combinations of discrete values \cite{Ramshaw}. Let $p\in \mathbb{P}_n$, it holds
\[
\mathbf{B}\left[  p\right]  \left(  u_{1},\,\ldots,\,u_{n}\right)  =\frac
{1}{n!}\sum_{\underset{k=|S|}{S\subset\left\{  1,\ldots,\,n\right\}  }%
}\,(-1)^{n-k}\,k^{n}\,p\left(  \frac{1}{k}\sum_{i\in S}\,u_{i}\right)  .
\]
Define
\[
\mathbf{M}\left[  f\right]  \left(  u_{1},\,\ldots,\,u_{n}\right)  :=\frac
{1}{n!}\sum_{\underset{k=|S|}{S\subset\left\{  1,\ldots,\,n\right\}  }%
}\,(-1)^{n-k}\,k^{n}\,f\left(  \frac{1}{k}\sum_{i\in S}\,u_{i}\right)  .
\]
We have the following result.

\begin{Cor}
The quasi-interpolation operator $\mathcal{Q}_{\varphi, \overline{\tau}_n}$ defined by (\ref{form1})
with
\begin{equation}
\nu_{i,\alpha}\left(  f\right)  =\mathbf{M}\left[  f\right]
\left(  v_{i},{\re\zeta_{i-1}[\alpha_{1}],\,\zeta_{i}[\alpha_{2}]}\right) \label{pl_qi}%
\end{equation}
meets (\ref{form2}).
\end{Cor}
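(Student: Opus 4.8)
The plan is to reduce this corollary to the reproduction property already established for the blossom-based quasi-interpolant of the Proposition, by showing that the discrete polarization functional $\mathbf{M}[\cdot]$ agrees with the true blossom $\mathbf{B}[\cdot]$ on each polynomial piece $p_i = p_{\mid [v_i,\zeta_i]} \in \mathbb{P}_{\varphi(i)}$. Once that agreement is in hand, for any $\tilde p \in \mathbb{P}_{\min_{0\leq i\leq n}\varphi(i)}$ the coefficients $\nu_{i,\alpha}(\tilde p) = \mathbf{M}[\tilde p_{\mid[v_i,\zeta_i]}](v_i,\zeta_{i-1}[\alpha_1],\zeta_i[\alpha_2])$ coincide with $\mathbf{B}[\tilde p_{\mid[v_i,\zeta_i]}](v_i,\zeta_{i-1}[\alpha_1],\zeta_i[\alpha_2])$, which are exactly the coefficients appearing in (\ref{quasi_interpolant}); hence $\mathcal{Q}_{\varphi,\overline{\tau}_n}\tilde p$ equals the operator of the Proposition applied to $\tilde p$, and that operator reproduces $\tilde p$ by the Proposition. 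This is the same pattern used to prove the Corollary for the differential functionals via $\mathbf{N}[\cdot]$, so the structure of the argument is completely parallel.

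\textbf{Key steps in order.} First I would invoke the discrete polarization identity recalled just above the statement: for $p \in \mathbb{P}_n$ and arbitrary arguments $u_1,\dots,u_n$, one has $\mathbf{B}[p](u_1,\dots,u_n) = \tfrac{1}{n!}\sum_{S\subset\{1,\dots,n\},\,k=|S|}(-1)^{n-k}k^n\,p\!\left(\tfrac1k\sum_{i\in S}u_i\right)$. Second, I would observe that the definition of $\mathbf{M}[f]$ is obtained from the right-hand side of that identity simply by replacing $p$ with a general $f$; therefore $\mathbf{M}[p](u_1,\dots,u_n) = \mathbf{B}[p](u_1,\dots,u_n)$ holds identically whenever $f=p$ is a polynomial of degree at most $n$. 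Third, I would apply this with $n = \varphi(i)$, $f = \tilde p_{\mid[v_i,\zeta_i]}$ (a polynomial of degree $\leq \min_j\varphi(j) \leq \varphi(i)$, hence in $\mathbb{P}_{\varphi(i)}$), and argument list $(v_i,\zeta_{i-1}[\alpha_1],\zeta_i[\alpha_2])$ with $|\alpha|=\varphi(i)-1$, to conclude $\nu_{i,\alpha}(\tilde p) = \mathbf{B}[\tilde p_{\mid[v_i,\zeta_i]}](v_i,\zeta_{i-1}[\alpha_1],\zeta_i[\alpha_2])$. Fourth, I would substitute this into (\ref{form1}) and note the resulting sum is literally (\ref{quasi_interpolant}) evaluated at $\tilde p$, so $\mathcal{Q}_{\varphi,\overline{\tau}_n}\tilde p = \tilde p$ by the reproduction statement of the Proposition. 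This establishes (\ref{form2}).

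\textbf{Main obstacle.} There is no deep obstacle: the only thing to be careful about is the degree bookkeeping, namely that $\tilde p \in \mathbb{P}_{\min_{0\leq i\leq n}\varphi(i)}$ guarantees each restriction $\tilde p_{\mid[v_i,\zeta_i]}$ lies in $\mathbb{P}_{\varphi(i)}$ so that the discrete polarization identity — which is exact only for polynomials of degree at most $n=\varphi(i)$ in $\varphi(i)$ variables — applies with the correct number of arguments. One should also note that the argument list $(v_i,\zeta_{i-1}[\alpha_1],\zeta_i[\alpha_2])$ indeed has $1+\alpha_1+\alpha_2 = 1+(\varphi(i)-1) = \varphi(i)$ entries, matching $n$. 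Beyond this verification the proof is immediate, and it can be stated in two or three lines mirroring the proof of the previous Corollary.
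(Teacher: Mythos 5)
Your proposal is correct and follows exactly the argument the paper intends: the paper omits an explicit proof of this Corollary precisely because, as you note, it is the verbatim analogue of the proof given for the differential functionals, resting on the fact that $\mathbf{M}[p]$ coincides with $\mathbf{B}[p]$ for $p\in\mathbb{P}_{\varphi(i)}$ by the discrete polarization identity, after which the Proposition's reproduction property applies. Your degree and argument-count bookkeeping ($1+\alpha_1+\alpha_2=\varphi(i)$ arguments, $\tilde p_{\mid[v_i,\zeta_i]}\in\mathbb{P}_{\varphi(i)}$) is exactly the check needed, so nothing is missing.
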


\section{Numerical results}
We consider the following  test functions defined on $I:=[0,1]$, to evaluate the effectiveness of the quasi-interpolation schemes introduced in this work. The two first functions are the 1D versions of Franke \cite{Franke} and Nielson \cite{Nielson} functions
\begin{align*}
f_{1}(x) & =\frac{3}{4}e^{-2(9x-2)^{2}}-\frac{1}{5}e^{-(9x-7)^{2}%
-(9x-4)^{2}}+\frac{1}{2}e^{-(9x-7)^{2}-\frac{1}{4}(9x-3)^{2}}+\frac{3}%
{4}e^{\frac{1}{10}(-9x-1)-\frac{1}{49}(9x+1)^{2}},\\
f_{2}(x) & =\frac{1}{2}x\cos^{4}\left(  4\left(  x^{2}+x-1\right)  \right),\quad f_{3}(x)  =x^{4}e^{-3 x^{2}}+\frac{1}{x^{6}+1}.
\end{align*}
Figure \ref{Test_functions} shows the plots of these test functions.
\begin{figure}[!h]
\centering
\includegraphics[scale=0.5]{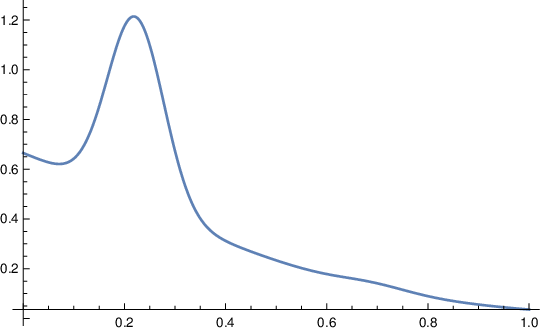}\quad \includegraphics[scale=0.5]{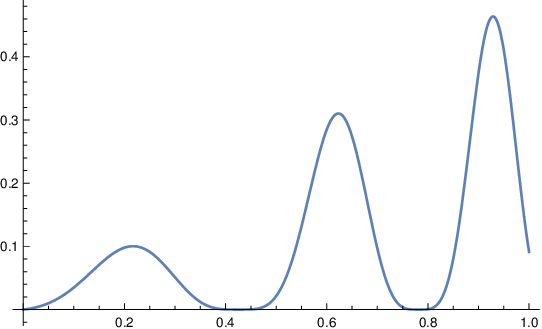}\quad \includegraphics[scale=0.5]{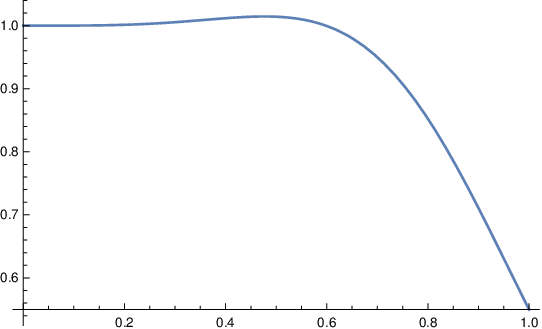}
\caption{Plots of the test functions, from left to right: $f_1$, $f_2$ and $f_3$.}\label{Test_functions}
\end{figure}

 Let us consider the bounded interval $I=[0, 1]$. The tests are carried out for a sequence of uniform partition $\tau_{n}$ of $I$ associated with the vertices $v_{i}:=i h$,  $i=0,\dots,n$, where $h:=\frac{1}{n}$. The inserted split points are the midpoints $\zeta_{i}:=\left(i+\frac{1}{2}\right) h$, $i=0,\dots,n-1$.

% We emphasize that the resulting errors and NCOs are computed  such as we consider that we have $3$ data in the break-points with an even index and $4$ data in the break-points with an odd index.

The quasi-interpolation error is estimated as
\[
\mathcal{E}_{n}:=\max_{0\leq\ell\leq200}\,|\mathcal{Q}f\left(  x_{\ell
}\right)  -f\left(  x_{\ell}\right) | 
\]
where $x_{\ell}, \ell=1, \ldots, 200$, are equally spaced points in $I$. Denote by $\mathcal{E}_{df,n}$ and  $\mathcal{E}_{dp,n}$ the estimated error $\mathcal{E}_{n}$ for the differential quasi-interpolant (\ref{df_qi}) and the discrete one based on polarization (\ref{pl_qi}), respectively. The Numerical Convergence Order (NCO) is given by the rate
\[
NCO:=\frac{\log\left(  \frac{\mathcal{E}_{n_{1}}}{\mathcal{E}_{n_{2}}%
}\right)  }{\log\left(  \frac{n_{2}}{n_{1}}\right)  }.
\]

In this work, an osculatory interpolation case has been treated, meaning that we do not have the same number of data at each knot. For that reason, and in order to be able to compare our numerical results with recent works, we consider the case of osculatory interpolation with three data at the vertices with an even index and four data at those with an odd index, i.e. $\varphi(2i)=3$, and $\varphi(2i+1)=4$.

Table \ref{tabdiff} shows the estimated errors relative to the differential quasi-interpolant (\ref{df_qi}) and the NCOs for  $f_{1}$ ,$f_{2}$ and $f_{3}$. 

\begin{table}[!htp]
\centering%
\begin{tabular}
[c]{|c|c|c|c|c|c|c|}\hline
$n$ & $\mathcal{E}_{df,n}(f_{1})$ & NCO & $\mathcal{E}_{df,n}(f_{2})$ & NCO& $\mathcal{E}_{df,n}(f_{3})$ & NCO \\\hline
\multicolumn{1}{|c|}{$16$} & \multicolumn{1}{|r|}{$1.4305\times10^{-3}$} &
$--$ & $1.84729\times10^{-3}$ &  $--$  & $1.78188\times10^{-6}$ & $--$ \\
\multicolumn{1}{|c|}{$32$} & \multicolumn{1}{|r|}{$6.11325\times10^{-5}$} & 
$4.54843$ & $1.46037\times10^{-4}$ & $3.661$ & $1.05981\times10^{-7}$ & $4.07153$ \\
\multicolumn{1}{|c|}{$64$} & \multicolumn{1}{|r|}{$6.48491\times10^{-6}$} &
$3.23678$ & $8.3046\times10^{-6}$ & $4.13628$ & $7.00157\times10^{-9}$ & $3.91998$  \\
\multicolumn{1}{|c|}{$128$} & \multicolumn{1}{|r|}{$3.66472\times10^{-7}$} &
$4.14531$ & $5.02361\times10^{-7}$ & $4.04712$ & $4.26038\times10^{-10}$ & $4.03862$ \\
\multicolumn{1}{|c|}{$256$} & \multicolumn{1}{|r|}{$2.48645\times10^{-8}$} &
$3.88154$ & $2.79162\times10^{-8}$ & $4.16955$ & $2.73356\times10^{-11}$ & $3.96213$ \\
\hline
\end{tabular}
\caption{Estimated errors of the differential Q.I (\ref{df_qi}) for the
functions $f_{1}$, $f_{2}$ and $f_{3}$  and NCOs with different values of $n$.}\label{tabdiff}
\end{table}
In Table \ref{tabpl1} we compare the results in Reference \cite{Jcam2023} for the test functions $f_1$ and $f_2$ with the errors provided by the discrete quasi-interpolant based on polarization formula.  Using up to four data in some vertices behaves better than the use of only three data in all the vertices, which explains the better results of the Table \ref{tabpl1}, as in reference \cite{Jcam2023} only three data are used in each vertex of the initial partition.

\begin{table}[!h]
\centering%
\begin{footnotesize}
\begin{tabular}
[c]{|c|c|c|c|c|c|c|c|c|}\hline
$n$ & $\mathcal{E}_{dp,n}(f_{1})$ & NCO & \text{Method in \cite{Jcam2023}} & NCO & $\mathcal{E}_{dp,n}(f_{2})$ & NCO & \text{Method in \cite{Jcam2023}} & NCO \\\hline
\multicolumn{1}{|c|}{$16$} & \multicolumn{1}{|r|}{$6.25902\times10^{-	4}$} &
$--$ &  $3.2851\times10^{-3}$ & $--$& $7.09872\times10^{-4}$ & $--$ & $8.3227\times10^{-3}$ & $--$  \\ 
\multicolumn{1}{|c|}{$32$} & \multicolumn{1}{|r|}{$3.10344\times10^{-5}$} &
$4.334$ & $3.8209\times10^{-4}$ & $3.10$ & $5.58036\times10^{-5}$ & $3.66913$ &  $5.1442\times10^{-4}$ & $4.01$ \\
\multicolumn{1}{|c|}{$64$} & \multicolumn{1}{|r|}{$2.42724\times10^{-6}$} &
$3.67648$ & $2.1478\times10^{-5}$ & $4.15$ & $3.8982\times10^{-6}$ & $3.83948$ & $2.9507\times10^{-5}$ & $4.12$ \\
\multicolumn{1}{|c|}{$128$} & \multicolumn{1}{|r|}{$1.76285\times10^{-7}$} &
$3.78334$ & $9.9300\times10^{-7}$ & $4.43$ &  $1.83669\times10^{-7}$ & $4.40763$& $1.8595\times10^{-6}$ & $3.98$\\
\multicolumn{1}{|c|}{$256$} & \multicolumn{1}{|r|}{$9.5628\times10^{-9}$} &
$4.20433$ & $7.5323\times10^{-8}$ & $3.72$&$1.29283\times10^{-8}$ & $3.8285$& $1.1592\times10^{-7}$ & $4.00$\\
\hline
\end{tabular}
\caption{Estimated errors of the quasi-interpolatant based on polarization and NCOs for the test functions $f_{1}$ and $f_{2}$ with different values of $n$.}\label{tabpl1}
\end{footnotesize}
\end{table}
In what follows,  we propose a comparison of the results obtained by the differential and discrete quasi-interpolants provided in this work with the results presented in \cite{Jcam2023,boujraf1,boujraf2,Rahouti}. To do that, the following test functions will be considered
\[
 g_{1}(x)= \sin(x),\quad 
g_{2}(x)= \frac{-1}{2}\left(\exp\left(\frac{x^{3}}{2}\right)-1\right) \cos\left(3 \pi x \right),\quad
\text{and}\quad
 g_{3}(x)=\exp\left(-3 x\right)\sin\left(\frac{\pi}{2}x\right).
\]

In Tables \ref{tabpl2}-\ref{tabpl4}, we list the resulting errors and NCOs for the approximation of functions $g_1$, $g_2$ and $g_3$ , respectively, by using the discrete quasi-interpolant based on polarization formulae (\ref{pl_qi}) presented here and those in references \cite{Jcam2023,boujraf1,boujraf2,Rahouti}. It is clear that the proposed scheme improves the results in those papers.
\begin{table}[!htp]
\centering%
\begin{tabular}
[c]{|c|c|c|c|c|c|c|}\hline
$n$ & $\mathcal{E}_{dp,n}(g_{1})$ & NCO &\text{Method in \cite{Jcam2023}} & NCO& \text{Method in \cite{boujraf1}} & NCO \\\hline
\multicolumn{1}{|c|}{$16$} & \multicolumn{1}{|r|}{$1.85742\times10^{-	9}$} &
$--$ & $2.1352\times10^{-8}$ &  $--$  & $3.25116\times10^{-7}$ & $--$ \\
\multicolumn{1}{|c|}{$32$} & \multicolumn{1}{|r|}{$1.16097\times10^{-10}$} &
$3.99989$ & $1.3627\times10^{-9}$ & $3.96$ & $2.13504\times10^{-8}$ & $3.92862$ \\
\multicolumn{1}{|c|}{$64$} & \multicolumn{1}{|r|}{$7.25608\times10^{-12}$} &
$4.00$ & $8.6021\times10^{-11}$ & $3.98$ & $1.36452\times10^{-9}$ & $3.96779$  \\
\multicolumn{1}{|c|}{$128$} & \multicolumn{1}{|r|}{$4.53415\times10^{-13}$} &
$4.00029$ & $5.4025\times10^{-12}$ & $3.99$ & $8.61907\times10^{-11}$ & $3.98472$ \\
\hline
\end{tabular}
\caption{Quasi-interpolation errors and NCOs for the test function $g_{1}$ with different values of $n$.}\label{tabpl2}
\end{table}

\begin{table}[!htp]
\centering%
\begin{tabular}
[c]{|c|c|c|c|c|c|c|}\hline
$n$ & $\mathcal{E}_{dp,n}(g_{2})$ & NCO &\text{Method in \cite{Jcam2023}} & NCO& \text{Method in \cite{boujraf2}} & NCO \\\hline
\multicolumn{1}{|c|}{$64$} & \multicolumn{1}{|r|}{$1.80915\times10^{-	8}$} &
$--$ & $2.1282\times10^{-7}$ & $--$  & $4.11124\times10^{-6}$ & $--$ \\
\multicolumn{1}{|c|}{$128$} & \multicolumn{1}{|r|}{$1.1413\times10^{-9}$} &
$3.98656$ & $1.3140\times10^{-8}$ & $4.01$ & $2.59629\times10^{-7}$ & $3.98505$ \\
\multicolumn{1}{|c|}{$256$} & \multicolumn{1}{|r|}{$7.13407\times10^{-11}$} &
$3.99981$ &  $7.3024\times10^{-10}$ & $4.16$ & $1.62327\times10^{-8}$ & $3.99948$  \\
\multicolumn{1}{|c|}{$512$} & \multicolumn{1}{|r|}{$4.46058\times10^{-12}$} &
$3.99942$ & $5.2902\times10^{-11}$ & $3.78$ & $1.01501\times10^{-9}$ & $3.99934$ \\
\hline
\end{tabular}
\caption{Quasi-interpolation errors and NCOs for the test function $g_{2}$ with different values of $n$.}\label{tabpl3}
\end{table}
\begin{table}[!htp]
\centering%
\begin{tabular}
[c]{|c|c|c|c|c|}\hline
$n$ & $\mathcal{E}_{dp,n}(g_{3})$ & NCO & \text{Method in \cite{Rahouti}} & NCO \\\hline
\multicolumn{1}{|c|}{$16$} & \multicolumn{1}{|r|}{$2.71919\times10^{-	7}$} &
$--$ & $3.2526\times10^{-7}$ & $--$\\
\multicolumn{1}{|c|}{$32$} & \multicolumn{1}{|r|} {$1.69909\times10^{-8}$} &
$4.00034$ &$2.2001\times10^{-8}$&$3.88593$ \\
\multicolumn{1}{|c|}{$64$} & \multicolumn{1}{|r|} {$1.06187\times10^{-9}$} &
$4.00008$ & $1.4290\times10^{-9}$ & $3.94444$\\
\multicolumn{1}{|c|}{$128$} & \multicolumn{1}{|r|}
{$6.6366\times10^{-11}$} &
$4.00002$ &  $9.1028\times10^{-11}$ & $3.97262$\\
\hline
\end{tabular}
\caption{Quasi-interpolation errors and NCOs for the test function $g_{3}$ with different values of $n$.}\label{tabpl4}
\end{table}

 In Table \ref{dataNum} we illustrate the number of data used for the construction of the quasi-interpolating splines used in this paper, and also for those used in the references \cite{Jcam2023,boujraf1,Rahouti}. Again, we consider the case of $\varphi (2i)=3$ and $\varphi (2i+1)=4$.

\begin{table}[!h]
\centering

\begin{tabular}{|c|c|c|c|c|c|c|}\hline
$ \overline{\tau}_n \setminus \mbox{Quasi-interpolant} $ & $\mathcal{E}_{df,n}$ & $\mathcal{E}_{dp,n}$ & \cite{Jcam2023} (cubic) & \cite{boujraf1} (cubic) & \cite{Rahouti} (cubic) \\\hline
$n$ & $\frac{7n}{2}$ & $9 n -1$ &  $ 3n-1 $  & $ n$  & $6n-1$ \\\hline
\end{tabular}
\caption{Data numbers used to construct the quasi-interpolation schemes proposed here, as well as those of the references \cite{Jcam2023,boujraf1,Rahouti}.}\label{dataNum}

\end{table}

Quasi-interpolation based on polarisation always needs more data than the other types. However, in the case of quasi-interpolant based on point values we can use the minimum number of data, i.e. we can use only $n$ data values.  Indeed, the operator defined in (\ref{QI_pv}) can be constructed from values at the partition points without the need for auxiliary values. It can be defined from values at the partition points surrounding the point under consideration.

\section{Conclusion}

In this work, we have proposed a normalized B-spline-like representation based on a two-point osculatory interpolation for a spline space defined on a refined partition. The splines considered are smooth and of low degrees.  The approach used to construct the basis functions is entirely geometric and yields splines with powerful properties such as non-negativity, the partition of unity, and compact support. We have also developed several quasi-interpolation operators based on blossoming and control polynomials. The provided quasi-interpolation schemes are tested numerically and demonstrate efficiency and performance with respect to some existing approaches.

\section*{Acknowledgements}
The authors wish to thank the anonymous referees for their very pertinent and useful comments which helped them to improve the original manuscript. The second author is a member of the research group GNCS of Italy and acknowledges the support of the MUR Excellence Department Project awarded to the Department of Mathematics, University of Rome Tor Vergata, CUP E83C23000330006. The  third author is a member of the research group FQM 191 Matem\'{a}tica Aplicada funded by the PAIDI programme of the Junta de Andaluc\'{i}a, Spain.

\end{document}